\author
{ Megan Owen
\thanks{ {\tt maowen@berkeley.edu}.
University of California Berkeley, Berkeley, CA, 95720.  This work was supported in part by NSF grant DMS-0555268 at Cornell University.  A 2-page extended abstract of a preliminary version of Section 4 was published in the online proceedings of the 17th Fall Workshop on Computational and Combinatorial Geometry (FWCG 2007).}
}
\title{Computing Geodesic Distances in Tree Space}
\newenvironment{my_enumerate}{
\begin{enumerate}
\setlength{\itemsep}{1pt}
\setlength{\parskip}{0pt}
\setlength{\parsep}{0pt}}{\end{enumerate}
}
\theoremstyle{plain}
\newtheorem{thm}{Theorem}[section]
\newtheorem{cor}[thm]{Corollary}
\newtheorem{lem}[thm]{Lemma}
\newtheorem{prop}[thm]{Proposition}
\theoremstyle{definition}
\newtheorem{defn}[thm]{Definition}
\newtheorem{prob}{Problem}
\newtheorem{rem}[thm]{Remark}
\newcommand{\R}{\mathbb{R}}
\newcommand{\T}{\mathcal{T}}
\newcommand{\Or}{\mathcal{O}}
\newcommand{\bs}{\backslash}
\newcommand{\linalg}{\textsc{PathSpaceGeo}}
\newcommand{\DPalg}{\textsc{GeodeMaps-Dynamic}}
\newcommand{\DCalg}{\textsc{GeodeMaps-Divide}}
\providecommand{\abs}[1]{\lvert#1\rvert}
\providecommand{\norm}[1]{\lVert#1\rVert}
\date{}
\begin{document}
\maketitle

\begin{abstract}
We present two algorithms for computing the geodesic distance between phylogenetic trees in tree space, as introduced by Billera, Holmes, and Vogtmann (2001).  We show that the possible combinatorial types of shortest paths between two trees can be compactly represented by a partially ordered set.  We calculate the shortest distance along each candidate path by converting the problem into one of finding the shortest path through a certain region of Euclidean space.  In particular, we show there is a linear time algorithm for finding the shortest path between a point in the all positive orthant and a point in the all negative orthant of $\R^k$ contained in the subspace of $\R^k$ consisting of all orthants with the first $i$ coordinates non-positive and the remaining coordinates non-negative for $0 \leq i \leq k$. 

\end{abstract}

\pagenumbering{arabic}
\section{Introduction}
Phylogenetic trees, or phylogenies, are used throughout biology to understand the evolutionary history of organisms ranging from primates to the HIV virus.  Outside of biology, they are used in studying the evolution of languages and culture, for example.  Often, reconstruction methods give multiple plausible phylogenetic trees on the same set of taxa, which we wish to compare using a quantitative distance measure.  A more general open question is how best to analyze sets of trees in a statistically rigourous manner, for example, by providing confidence intervals for the generated trees.  The tree space of Billera, Holmes, and Vogtmann \cite{BHV01} and its corresponding geodesic distance measure were developed to provide a framework for addressing these issues (\cite{Holmes03} and \cite{Holmes05}).  In this paper, we give several combinatorial and metric properties of this space in the process of developing two practical algorithms for computing this distance. 


There are many different algorithms to construct phylogenetic trees from biological data  (\cite{DEKM98} and its references), but their accuracy can be affected by such factors as the underlying tree shape  \cite{HP89} or the rate of mutation in the DNA sequences used \cite{KF94}.  To compare these methods through simulation, or to find the likelihood that a certain tree is generated from the data, researchers need to be able to compute a biologically meaningful distance between trees \cite{KF94}.  Several different distances between phylogenetic trees have been proposed (e.g. \cite{DHJLT99}, \cite{EstabrookMcMorrisMeacham85}, \cite{H90}, \cite{R71}, \cite{RF81}).  With the exception of the weighted Robinson-Foulds distance \cite{RF79}, none of these distances incorporate tree edge lengths.
 
In response to the need for a distance measure between phylogenetic trees that naturally incorporates both the tree topology and the lengths of the edges, Billera et al.  \cite{BHV01} introduced the \emph{geodesic distance}.   This distance measure is derived from the tree space, $\T_n$, which contains all phylogenetic trees with $n$ leaves.  The tree space is formed from a set of Euclidean regions, called orthants, one for each topologically different tree.  Two regions are connected if their corresponding trees are considered to be neighbours.  Each phylogenetic tree with $n$ leaves is represented as a point within this space.  There is a unique shortest path, called the \emph{geodesic}, between each pair of trees.  The length of this path is our distance metric.  

The most closely related work is by Staple \cite{S04} and Kupczok et al. \cite{KHK08}, who developed algorithms to compute the geodesic distance based on the notes of Vogtmann \cite{V07}.  Both of these algorithms are exponential in the number of different edges in the two trees.  Although Kupczok et al. developed their algorithm \textsc{GeoMeTree} independently, it can be considered a direct improvement to the algorithm of Staple.  We show in Section 5 that our algorithm performs significantly better than \textsc{GeoMeTree}, although it is still exponential.  A polynomial time, $\sqrt{2}$-approximation of the geodesic distance was given by Amenta et al. \cite{AGPS07}.  Since the submission of this paper, a polynomial time algorithm has been developed to compute the geodesic distance \cite{OwenProvan11}.

Our primary contribution is the three main combinatorial and geometric ideas behind the two algorithms we give for computing the geodesic distance.  First, the candidate shortest paths between trees can be represented as an easily constructible partially ordered set, giving information about the combinatorics of the tree space.  Second, we can find the length of each candidate shortest path by translating the problem into one of finding the shortest path through a region of a lower dimensional Euclidean space.  The solution to this new problem is a linear algorithm for a special case of the Euclidean shortest-path problem with obstacles.  Since the general problem is NP-hard for dimensions greater than 2, this result is also of interest to computational geometers.  Finally, we show that the combinatorics of the geodesic depend on the combinatorics of the geodesic between two simpler trees.  This observation makes it possible to use either a dynamic programming or a divide and conquer approach to significantly reduce the search space.  The two resulting algorithms are computationally practical on some biological data sets of interest.

The remainder of this paper is organized as follows.  In Section 2, we describe the tree space and the geodesic distance.  The problem of finding the geodesic distance has both a combinatorial component, which is investigated in Section 3, and a geometric component, which is covered in Section 4.  More specifically, we introduce a combinatorial framework in Section 3, which represents the candidate shortest paths between trees by an easily constructible partially ordered set (Theorem~\ref{th:one_to_one}).  In Section 4, we translate the problem of calculating the length of a candidate shortest path into a problem in Euclidean space (Theorem~\ref{th:homeo_to_Rm}), and then show that this Euclidean problem can be solved in linear time (Theorem~\ref{th:get_dist} and Theorem~\ref{th:PATHGEOcomplexity}).  Section 5 combines the ideas of Sections 3 and 4 to show that the path taken by a geodesic is related to the geodesic path between two simpler trees (Theorem~\ref{th:can_use_dp}).  This theorem is exploited via dynamic programming and divide and conquer techniques to give two algorithms.
%
%
%
\section{Tree Space and Geodesic Distance}
This section describes the space of phylogenetic trees, $\T_n$, and the geodesic distance.  For further details, see \cite{BHV01}.  A \emph{phylogenetic tree}, or just \emph{tree}, $T = (X, \Sigma)$ is a rooted tree, whose leaves are in bijection with a set of labels $X$ representing different organisms, and whose interior edges are represented by the set $\Sigma$ of non-trivial \emph{splits}.  For this paper, let $X = \{1, ..., n\}$.  The root is labelled with $0$ and sometimes treated like a leaf.  We consider both bifurcating (or binary) trees, in which each interior vertex has degree 3, and multifurcating (or degenerate) trees, in which at least one interior vertex has degree $> 3$. 

A \emph{split} $A | B$ is a partition of $X \cup \{0\}$ into two non-empty sets $A$ and $B$.  A split is in $T$ if it corresponds to some edge $e$ in $T$, such that deleting edge $e$ from $T$ divides $T$ into two subtrees, with one subtree containing exactly the leaves in $A$ and the other subtree containing exactly the leaves in $B$. 
For example, in Figure~\ref{fig:what_is_an_edge}, the split corresponding to the edge $e_3$ partitions the leaves into the sets $\{2,3\}$ and $\{0,1,4,5\}$.  We will refer to a split corresponding to an edge ending in a leaf as a \emph{trivial split}, and to all other splits as simply splits.  A \emph{split of type $n$} is a partition of the set $\{0, 1, ..., n\}$ into two blocks, each containing at least two elements.  If $A \subseteq \Sigma$ is a set of splits in $T$, then let $T /A$ be the tree $T$ with the edges that correspond to $A$ contracted.
%
\begin{figure}[ht]
\centering
\includegraphics[scale = 0.4]{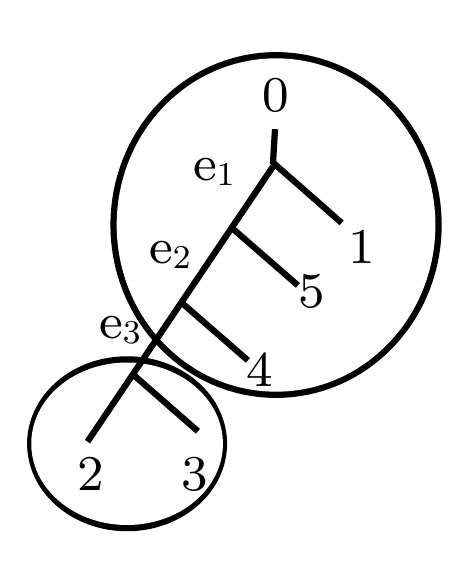}
\caption{The split corresponding to the edge $e_3$.}
\label{fig:what_is_an_edge}
\end{figure}
Two splits $e = X | X'$ and $e' = Y | Y'$ are \emph{compatible} if one of $X \cap Y$, $X \cap Y'$, $X' \cap Y$ or $X' \cap Y'$ is empty.  Equivalently, two splits are compatible if their corresponding edges can exist in the same phylogenetic tree.  For example, in Figure~\ref{fig:what_is_an_edge}, the split $e_3 = \{2, 3\} | \{0, 1, 4, 5\}$ is compatible with the split $e_2  = \{2,3,4\} | \{0,1,5\}$, because $\{2,3\} \cap \{0,1,5\} = \varnothing$.  However, $e_3$ is incompatible with $f = \{1,2\} | \{0,3,4,5\}$.  Two sets of mutually compatible splits of type $n$, $A$ and $B$, are \emph{compatible} if $A \cup B$ is a set of mutually compatible splits. 

For a tree $T = (X, \Sigma)$, each edge, and hence split, $e \in \Sigma$ is associated with a non-negative length $|e|_T$.  For example, this length often represents the expected number of mutations per DNA character site.  
Two splits are considered the same if they have identical partitions, regardless of their associated lengths.  For any set of compatible splits $A \subseteq \Sigma$, let $\norm{A} = \sqrt{\sum_{e \in A} |e|_T^2}$. 
\subsection{Tree Space} \label{sec:tree_space}
We now describe the space of phylogenetic trees, $\T_n$, as constructed by Billera et al. \cite{BHV01}.  It is homeomorphic, but not isometric, to the tropical Grassmannian \cite{SpeyerSturmfels04} and the Bergman fan of the graphic matroid of the complete graph \cite{ArdilaKlivans04}.  This space contains all bifurcating and multifurcating phylogenetic trees with $n$ leaves.  In this space, each tree topology with $n$ leaves is associated with a Euclidean region, called an \emph{orthant}.  The points in the orthant represent trees with the same topology, but different edge lengths.  These orthants are attached, or glued together, to form the tree space.  

We do not use the lengths of the edges ending in leaves in the definition of tree space, but can easily include them by considering geodesics through $\T_n \times \R^n_+$, as noted in Billera et al. \cite{BHV01}.

Any set of $n-2$ compatible splits corresponds to a unique rooted phylogenetic tree topology \cite[Theorem 3.1.4]{SS03}.  For any such split set $\Sigma$ corresponding to tree $T$, associate each split with a vector such that the $n-2$ vectors are mutually orthogonal.  The cone formed by these vectors is the orthant associated with the topology of $T$.  Recall that the $k$-dimensional \emph{(nonnegative) orthant} is the non-negative part of $\R^k$, denoted $\R^k_+$.  A point $(x_1, ..., x_{n-2})$ in $\R^{n-2}_+$ represents the tree in which the edge associated with the $i$-axis has length $x_i$, for all $1 \leq i \leq n-2$, as illustrated in Figure~\ref{fig:orthants}.  If $x_i = 0$, then the tree is on a face of the orthant, and we say that it does not contain the edge associated with the $i$-axis.  Furthermore, two orthants can share the same boundary face, and thus are attached.  For example, in Figure~\ref{fig:orthants}, the trees $T_1$ and $T_1'$ are represented as two distinct points in the same orthant, because they have the same topology, but different edge lengths.  The tree $T_0$ has only one edge, $e_1$, and thus is a point on the $e_1$ axis.  

Notice that although Figure~\ref{fig:orthants} is drawn in the plane, it actually sits in $\R^3$, with each of the axes or splits corresponding to a different dimension.  In general, $\T_n$ sits in $\R^{\mathcal{N}}$, where $\mathcal{N} = 2^n - n - 2$ is the number of possible splits of type $n$.  However, as no point in $\T_n$ has a negative coordinate in $\R^{\mathcal{N}}$, we may draw the positive and negative parts of an axis as corresponding to different splits.

%
\begin{figure}[ht]
\centering
\subfigure[Two orthants in $\T_4$.]{
\includegraphics[scale = 0.3]{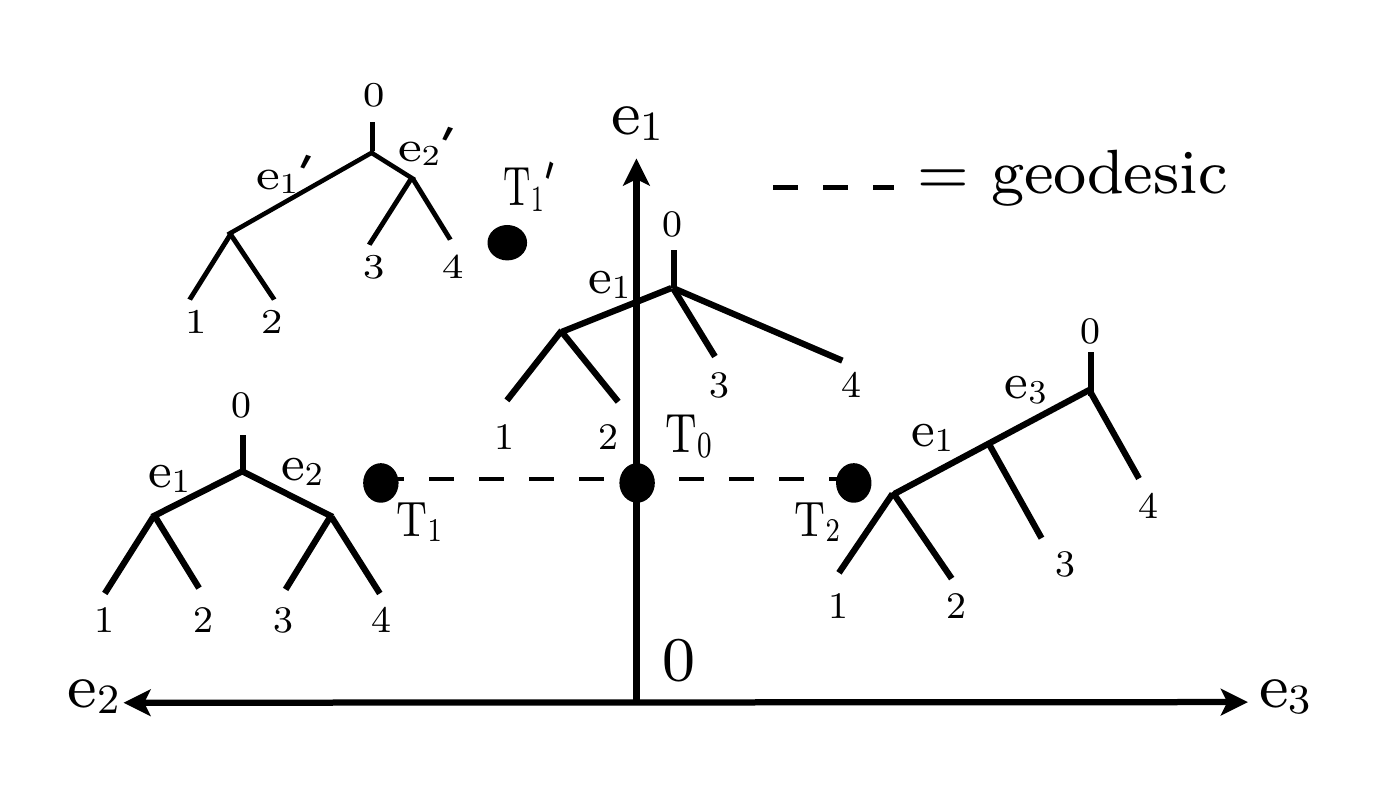}
\label{fig:orthants}
}
\subfigure[Both edge length and tree topology determine the geodesic.]{
\includegraphics[scale = 0.3]{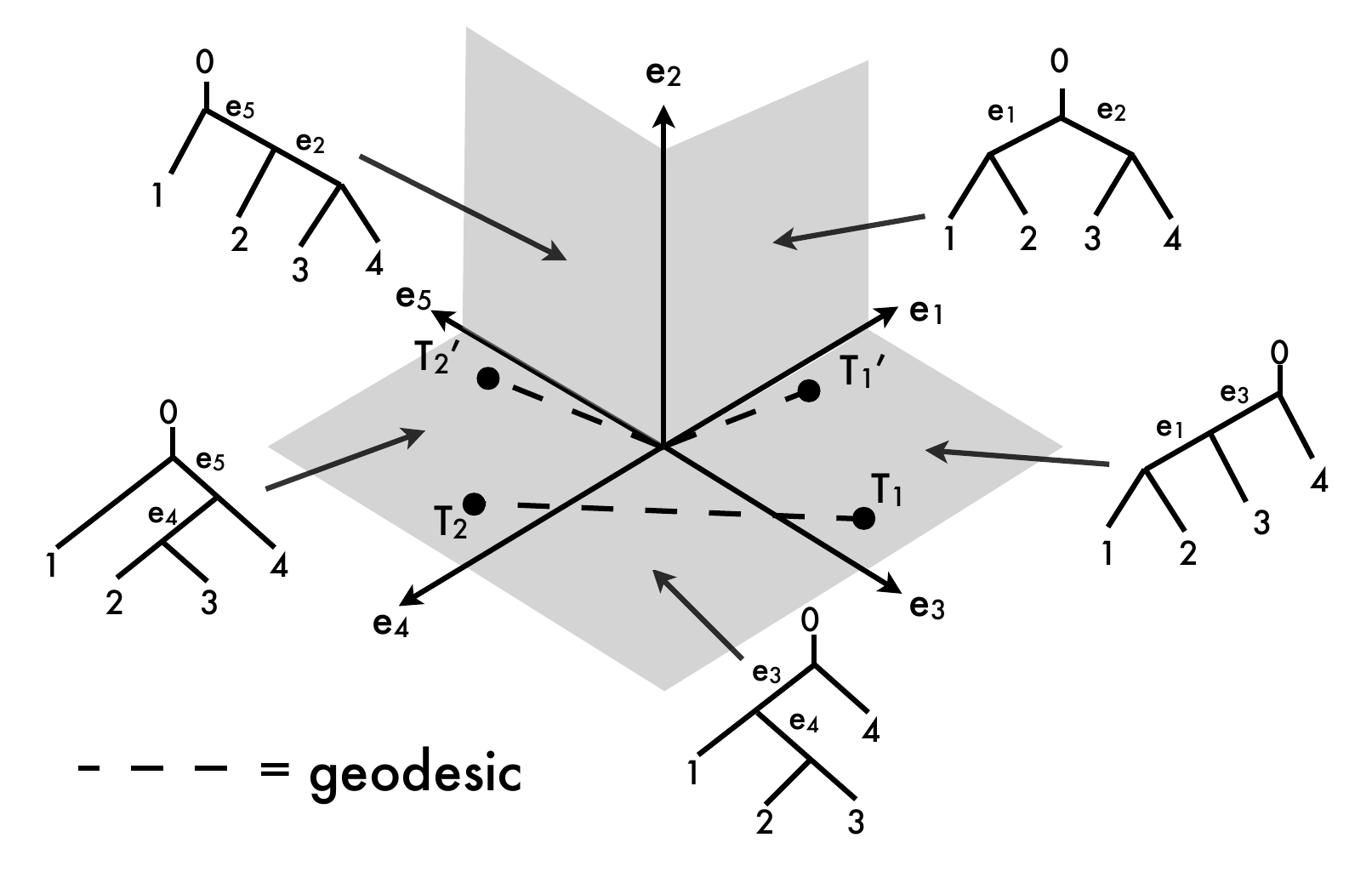}
\label{fig:5_orthants}
}
\caption{The geometry of tree space.}
\end{figure}
For any set $A$ of compatible splits with lengths, let $T(A)$ represent the tree containing exactly the edges corresponding to the splits $A$, with the given lengths.  Let $\Or(A)$ be the orthant of lowest dimension containing $T(A)$.  For any $t \geq 0$, let $t \cdot A$ be the set of splits $A$ whose lengths have all been multiplied by $t$.  If $A$ and $B$ are two compatible sets of mutually compatible splits of type $n$, then we define the binary operator $+$ on the orthants of $\T_n$ by $\Or(A) + \Or(B) = \Or(A \cup B)$.
%
%
%
%
\subsection{Geodesic Distance}
There is a natural metric on $\T_n$.  The distance between two trees in the same orthant is the Euclidean distance between them.  The distance between two trees in different orthants is the length of the shortest path between them, where the length of a path is the sum of the Euclidean lengths of the intersections of this path with each orthant.
 For any trees $T_1$ and $T_2$ in $\T_n$, the \emph{geodesic distance}, $d(T_1, T_2)$, between $T_1$ and $T_2$ is the length of the \emph{geodesic}, or locally shortest path, between $T_1$ and $T_2$ in $\T_n$.  Billera et al. defined this distance, and proved that $\T_n$ is non-positively curved \cite{BH99}, and in particular CAT(0) \cite[Lemma~4.1]{BHV01}, and thus the geodesic between any two trees in $\T_n$ is unique.
 
For example, in Figure~\ref{fig:orthants}, the geodesic between the trees $T_1$ and $T_2$ is represented by the dashed line.  Figure~\ref{fig:5_orthants} depicts 5 of the 15 orthants in $\T_4$.  This figure also illustrates that the edge lengths, in addition to the tree topologies, determine the intermediate orthants through which the geodesic passes. 
\subsection{The Essential Problem}
The problem of finding the geodesic between two arbitrary trees in $\T_n$ can be reduced in polynomial time to the problem of finding the geodesic between two trees with no splits in common.  Furthermore, the lengths of the pendant edges can easily be included in the distance calculation, if desired.

Vogtmann \cite{V07} proved the following theorem, which explains how to decompose the problem of finding the geodesic when the trees share a common split.  An alternative proof is given in \cite{thesis}.  Let $T_1$ and $T_2$ be two trees with a common split $e = X | Y$, where $0 \in X$, as shown in Figure~\ref{fig:common_edge_T1}.  For $i \in \{1, 2\}$, let $T_i^X$ be the tree $T_i$ with edge $e$ and any edge below $e$ contracted.  That is, any edge $e' = X' | Y'$ such that $X' \subset Y$ or $Y' \subset Y$ is contracted, as shown in Figure~\ref{fig:common_edge_T1A}.  For $i \in \{1, 2\}$, let $T_i^Y$ be the tree $T_i$ formed by contracting edge $e$ and all edges not contracted in $T_i^X$.  That is, any edge $e' = X' | Y'$ such that $X' \subset X$ or $Y' \subset X$ is contracted, as in Figure~\ref{fig:common_edge_T1B}.     
\begin{figure}[ht]
\centering
\subfigure[Tree $T_i$.]{
\includegraphics[scale=0.3]{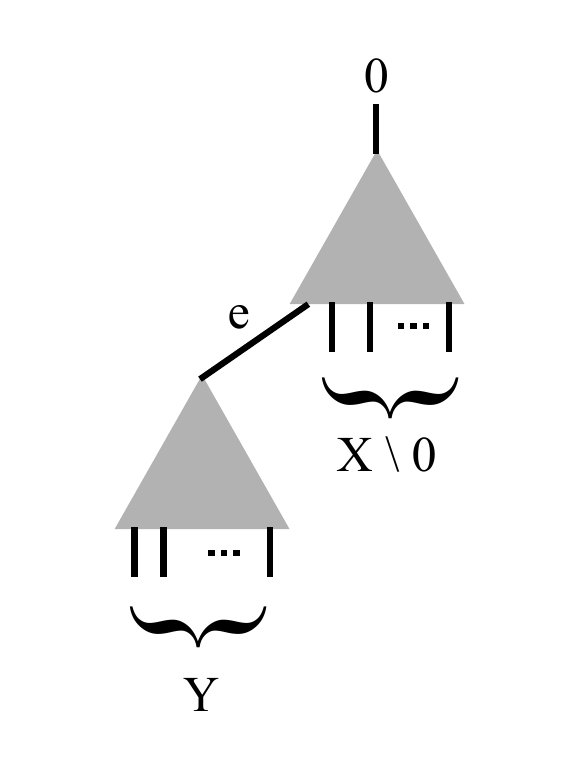}
\label{fig:common_edge_T1}
}
\subfigure[Tree $T_i^X$.]{
\includegraphics[scale=0.3]{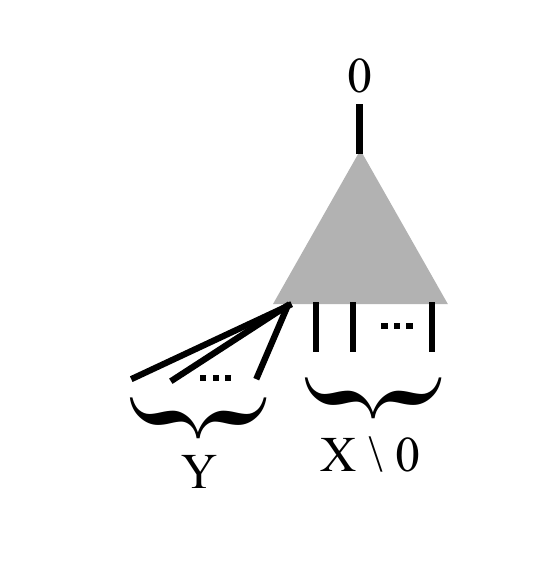}
\label{fig:common_edge_T1A}
}
\subfigure[Tree $T_i^Y$.]{
\includegraphics[scale=0.3]{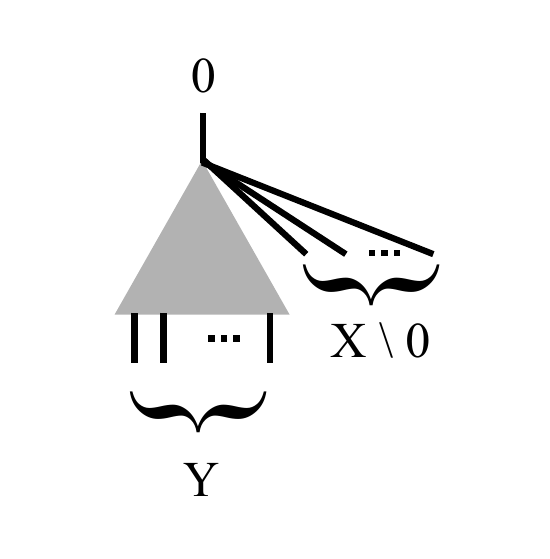}
\label{fig:common_edge_T1B}
}
\caption{Forming the trees $T_i^X$ and $T_i^Y$ from $T_i$ for $i \in \{1, 2\}$.}
\label{fig:common_edge}
\end{figure}
\begin{thm}{} 
\label{th:dist_if_common_edge}
If $T_1$ and $T_2$ have a common split $e$, and $T_i^X$ and $T_i^Y$ are as described in the above paragraph for $i \in \{1, 2\}$, then $d(T_1, T_2) = \sqrt{d(T_1^X, T_2^X)^2 + d(T_1^Y, T_2^Y)^2 + \left ( \abs{e}_{T_1} - \abs{e}_{T_2} \right )^2}$. 
\end{thm}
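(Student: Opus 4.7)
The plan is to exploit the product structure of the subspace of $\T_n$ consisting of trees that contain the common split $e$, combined with the CAT(0) geometry of tree space.

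First I would introduce the subspace $\T_n^e = \{T \in \T_n : e \in T\}$ and observe that it has a natural product decomposition. Write $X' = X \setminus \{0\}$ and note that any tree in $\T_n^e$ is uniquely determined by: a tree on leaf set $Y \cup \{*\}$ (rooted at $*$), a tree on leaf set $X' \cup \{*'\}$ (rooted at $0$, with $*'$ marking where $Y$ is attached), and the length of the edge $e$. The splits on the ``$Y$-side'' and the ``$X$-side'' of $e$ are automatically pairwise compatible with each other (since their defining subsets are disjoint), so every orthant of $\T_n^e$ factors as a product of an orthant in the smaller tree space $\T^A$ (on $Y \cup \{*\}$), an orthant in $\T^B$ (on $X' \cup \{*'\}$), and the nonnegative $e$-axis. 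Crucially, this product structure is compatible with the gluing: two orthants of $\T_n^e$ share a face exactly when the corresponding pairs of orthants in $\T^A$ and $\T^B$ share faces (with the $e$-coordinate free), so $\T_n^e$ is isometric to $\T^A \times \T^B \times \R_{\geq 0}$ equipped with the product metric. Under this isometry, $T_i$ corresponds to the triple $(T_i^A, T_i^B, |e|_{T_i})$.

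Second, by the standard formula for geodesics in a product of geodesic metric spaces, the length of the geodesic between $(T_1^A, T_1^B, |e|_{T_1})$ and $(T_2^A, T_2^B, |e|_{T_2})$ inside $\T^A \times \T^B \times \R$ is exactly
\[
\sqrt{d_{\T^A}(T_1^A, T_2^A)^2 + d_{\T^B}(T_1^B, T_2^B)^2 + (|e|_{T_1} - |e|_{T_2})^2},
\]
since the componentwise geodesic realizes the product distance. So it suffices to argue that this intrinsic geodesic of $\T_n^e$ is actually the geodesic in the ambient space $\T_n$.

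The main obstacle, and the heart of the proof, is showing that the geodesic in $\T_n$ from $T_1$ to $T_2$ never leaves $\T_n^e$. The cleanest way is to invoke that $\T_n$ is CAT(0) (cited above from \cite{BHV01}) together with the fact that $\T_n^e$ is a closed, geodesically convex subspace: any locally length-minimizing path between points of $\T_n^e$ that passes through an orthant not containing $e$ could be shortened by projecting onto $\T_n^e$, since dropping an orthant coordinate that is orthogonal to all coordinates of $\T_n^e$ strictly reduces Euclidean length within each piece while still yielding a continuous path. Since CAT(0) geodesics are unique, this projected path must coincide with the geodesic, forcing it to lie in $\T_n^e$. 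Combined with the product-metric computation above, this yields the claimed Pythagorean identity.
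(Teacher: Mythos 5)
The paper does not prove this theorem itself; it attributes the result to Vogtmann's unpublished notes \cite{V07} and remarks that an alternative proof appears in the author's thesis. Judged on its own merits, your outline is correct, and the strategy --- isometrically identifying the subspace of trees containing $e$ with the product $\T^A \times \T^B \times \R_{\geq 0}$, then establishing geodesic convexity of that subspace via a $1$-Lipschitz retraction together with CAT(0) uniqueness of geodesics --- is the natural one and almost certainly what the cited sources do.

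Two points in the convexity step deserve more care than your sketch gives them. First, the retraction $\pi$ that zeroes out the coordinates of every split incompatible with $e$ reduces length only \emph{weakly}, not strictly, within each orthant; the correct statement is that $\pi$ is $1$-Lipschitz on paths and is the identity on the subspace, so for the geodesic $\gamma$ from $T_1$ to $T_2$ the image $\pi \circ \gamma$ has length at most that of $\gamma$, and then CAT(0) uniqueness forces $\pi \circ \gamma = \gamma$. Second, you assert that the projected path is ``still a continuous path'' but do not say why; the reason is that $\pi$ is given by the same coordinate-deletion rule on every orthant, so the orthant-wise definitions agree on shared faces and $\pi$ is globally continuous, and it visibly lands in the closed subspace because deleting the splits incompatible with $e$ from any split set leaves a set of mutually compatible splits that are all compatible with $e$. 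Finally, a cosmetic inconsistency: you place $\T^A$ on leaf set $Y \cup \{*\}$, but the paper forms $T_i^A$ by contracting the $Y$-side, so $T_i^A$ carries the $X$-side splits; since the claimed formula is symmetric in $A$ and $B$ this is harmless, but it should be fixed for the correspondence $T_i \leftrightarrow (T_i^A, T_i^B, \abs{e}_{T_i})$ to be literally right.
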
 
As noted in Section~\ref{sec:tree_space}, the length of the edges ending in leaves can be included in the distance calculations by considering the product space $\T_n \times \R^n_+$, and the shortest distance, $d_l(T_1, T_2)$, between the trees in this space.  In this case, if the length of the edge to leaf $i$ in tree $T$ is $\abs{l_i}_{T}$ for all $1 \leq i \leq n$, then $d_l(T_1, T_2) = \sqrt{d(T_1, T_2)^2 + \sum_{i=1}^n\left( \abs{l_i}_{T_1} - \abs{l_i}_{T_2} \right)^2}$.

Therefore, the essential problem is as follows, and we devote the rest of this paper to it.
\begin{prob}
Find the geodesic distance between $T_1$ and $T_2$, two trees in $\T_n$ with no common splits.
\end{prob}
%
%
%
%
\section{Combinatorics of Path Spaces}
The properties of the geodesic imply that it is restricted to certain orthants in the tree space.  In this section, we model this section of tree space as a partially ordered set (poset), called the \emph{path poset}, in which each element corresponds to an orthant in tree space.  This poset enables us to enumerate all orthant sequences that could contain the geodesic, because each such orthant sequence, called a \emph{path space}, corresponds to one of the maximal chains of this poset by Theorem~\ref{th:one_to_one}.  

For this section, assume that $T_1 = (X, \Sigma_1)$ and $T_2 = (X, \Sigma_2)$ are two trees in $\T_n$ with no common splits.  That is, $\Sigma_1 \cap \Sigma_2 = \varnothing$.
\subsection{The Incompatibility and Path Partially Ordered Sets}
We first define the incompatibility poset, which encodes the incompatibilities between splits in $T_1$ and $T_2$.  It will be used to construct the path poset.  To define these posets, we introduce the following two definitions.

Let $A$ and $B$ be two sets of mutually compatible splits of type $n$, such that $A \cap B = \varnothing$.  Define the \emph{compatibility set of $A$ in $B$}, $C_B(A)$, to be the set of splits in $B$ which are compatible with every split in $A$.  Define the \emph{crossing set of $A$ in $B$}, $X_{B}(A)$, to be the set of splits in $B$ which are incompatible with at least one split in $A$.  


If $D$ is a set of mutually compatible splits of type $n$ such that $D \subseteq A$, then:
\begin{my_enumerate}
\item $C_{B}(A) \subseteq C_{B}(D)$ (\emph{opposite monotonicity of the compatibility set}),
\item $X_B(D) \subseteq X_B(A)$ (\emph{monotonicity of the crossing set}),
\item $C_B(A)$ and $X_B(A)$ partition $B$ (\emph{partitioning}).
\end{my_enumerate}

A \emph{preposet} or \emph{quasi-ordered set} is a set $P$ and binary relation $\leq$ that is reflexive and transitive.  See \cite[Exercise 1]{Stanley1} for more details.  Define the \emph{incompatibility preposet}, $\widetilde{P}(\Sigma_1, \Sigma_2)$, to be the preposet containing the elements of $\Sigma_2$, ordered by inclusion of their crossing sets.  So, for any $f, f' \in \Sigma_2$, $f \leq f'$ in $\widetilde{P}(\Sigma_1, \Sigma_2)$ if and only if $X_{\Sigma_1}(f) \subseteq X_{\Sigma_1}(f')$.  Define the equivalence relation $f \sim f'$ if and only if $f \leq f'$ and $f' \leq f$.  Thus, all the splits in an equivalence class have the same crossing set, which we define to be the crossing set of that equivalence class.  
\begin{defn}
\label{defn:incompatibility_poset}
The \emph{incompatibility poset}, $P(\Sigma_1, \Sigma_2)$, consists of the equivalence classes defined by $\sim$ in the preposet $\widetilde{P}(\Sigma_1, \Sigma_2)$ ordered by inclusion of their crossing sets.  
\end{defn}
Generally, we will be informal, and treat the elements of the incompatibility poset as sets of $\Sigma_2$, ordered by inclusion of their crossing sets in $\Sigma_1$.  For example, Figure~\ref{fig:DPexampleIncompPoset} shows the incompatibility poset $P(\Sigma_1, \Sigma_2)$ for the trees $T_1$ and $T_2$, given in Figures~\ref{fig:DPexampleT1} and~\ref{fig:DPexampleT2}, respectively. 
%
\begin{figure}[ht]
\centering
\subfigure[Tree $T_1 = (X, \Sigma_1)$.]{
\includegraphics[scale=0.5]{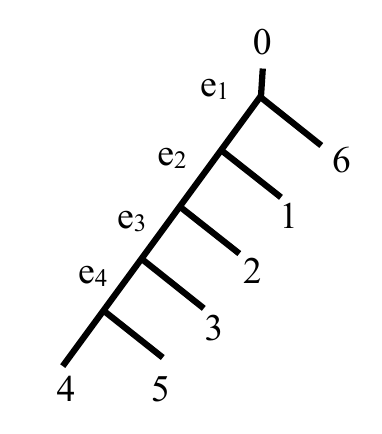}
\label{fig:DPexampleT1}
}
\subfigure[Tree $T_2 = (X, \Sigma_2)$.]{
\includegraphics[scale=0.5]{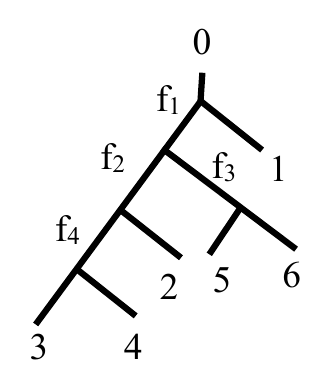}
\label{fig:DPexampleT2}
}
\subfigure[Incompatibility poset $P(\Sigma_1, \Sigma_2)$]{
\includegraphics[scale=0.5]{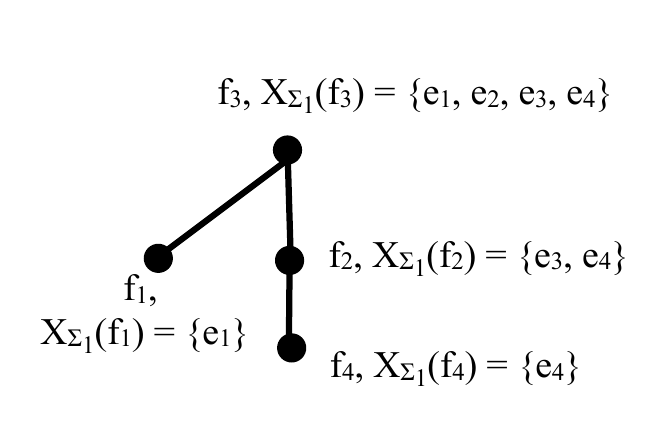}
\label{fig:DPexampleIncompPoset}
}
\subfigure[Path poset $K(\Sigma_1, \Sigma_2)$]{
\includegraphics[scale=0.3]{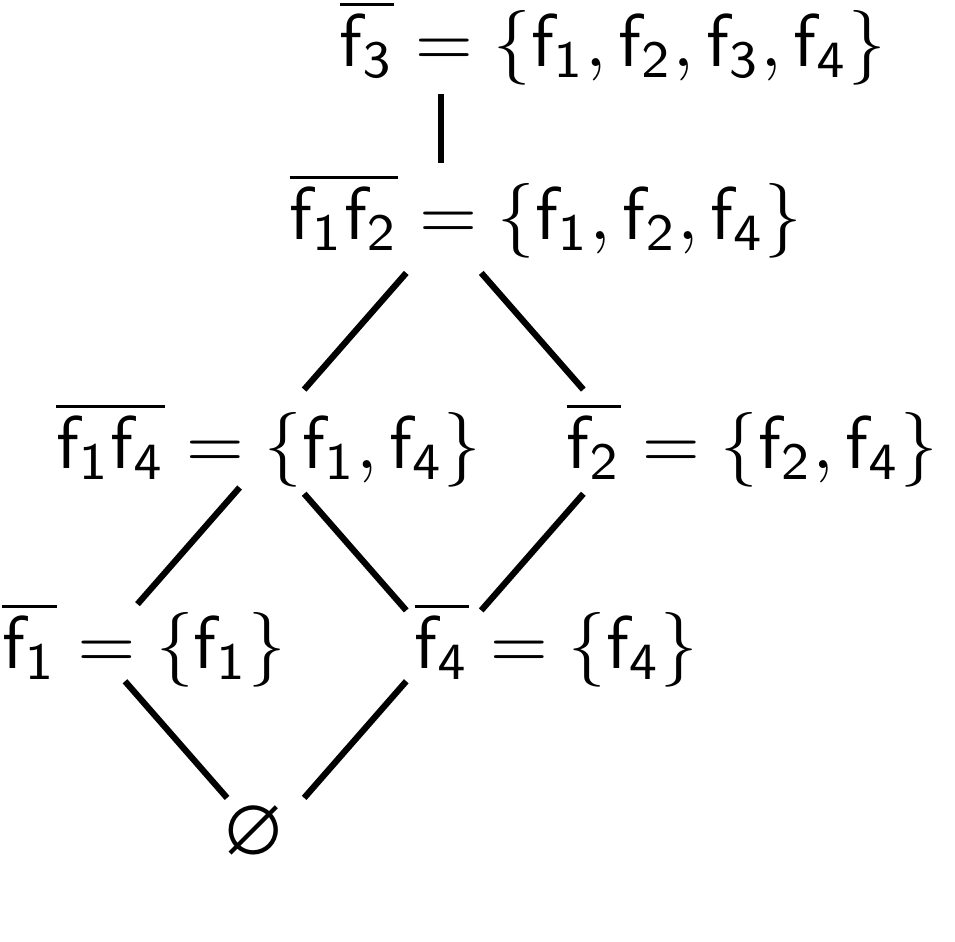}
\label{fig:DPexamplePathPoset}
}
\caption{The incompatibility poset for the trees $T_1$ (a) and $T_2$ (b) is shown in (c).  The crossing sets of the elements of $\Sigma_2$, which are ordered by inclusion to give the incompatibility poset, are also shown in the labels.  The path poset of $T_1$ and $T_2$ is given in (d).}
\label{fig:basic_example}
\end{figure}

For any $A \in \Sigma_2$, define $\overline{A} \in \Sigma_2$ by 
\[A \mapsto \overline{A} = \{f \in \Sigma_2 : X_{\Sigma_1}(f) \subseteq X_{\Sigma_1}(A) \}. \]
Note that by definition, $X_{\Sigma_1}(A) = X_{\Sigma_1}(\overline{A})$.  The map $X \mapsto \overline{X}$ is a \emph{closure operator} on a set $I$ if for every subset $X \subset I$, $\overline{ }$ it is extensive ($X \subset \overline{X}$), idempotent ($\overline{X} = \overline{\overline{X}}$), and isotone (if $X \subset Y$, then $\overline{X} \subset \overline{Y}$) \cite{Birkhoff67}.  From the definition and the monotonicity of crossing set, $A \mapsto \overline{A}$ is a closure operator on $\Sigma_2$.  
%
\begin{defn}
The \emph{path poset from $\Sigma_1$ to $\Sigma_2$}, $K(\Sigma_1,\Sigma_2)$, is the closed sets of $\Sigma_2$ ordered by inclusion.
\end{defn}
The path poset represents the possible orthant sequences containing the geodesic between $T_1$ and $T_2$, and we next make clear this correspondence.  The path poset is bounded below by $\varnothing$, and above by $\Sigma_2$.  It is a sublattice of the lattice of order ideals of $P(\Sigma_1, \Sigma_2)$, but need not be graded \cite{thesis}.  Figure~\ref{fig:DPexamplePathPoset} gives an example of a path poset.  For simplicity in the figures, we omit the brackets, writing $\overline{f_1 f_4}$ instead of $\overline{ \{f_1, f_4 \} }$, for example.
%
%
%
%
%
%
%
%
%
\subsection{Path Spaces}
The geodesic is contained in some sequence of orthants connecting the orthants containing $T_1$ and $T_2$.  Billera et al. \cite{BHV01} defined a set of orthant sequences, such that at least one of them contains the geodesic.  We call such orthant sequences \emph{path spaces}.  We characterize all maximal path spaces in Theorem~\ref{th:max_path_space_conditions}, and show that they are in one-to-one correspondence with the maximal chains in $K(\Sigma_1, \Sigma_2)$ in Theorem~\ref{th:one_to_one}.  
%
%
\begin{defn}
For trees $T_1$ and $T_2$ with no common splits, let $\Sigma_1 = E_0 \supset E_1 \supset ... \supset E_{k-1} \supset  E_k = \varnothing$, and $\varnothing = F_0 \subset F_1 \subset ... \subset F_{k-1} \subset F_k = \Sigma_2$ be sets of splits such that $E_i$ and $F_i$ are compatible for all $0 \leq i \leq k$.  Then $\cup_{i=0}^k \mathcal{O}(E_i \cup F_i)$ is a \emph{path space} between $T_1$ and $T_2$.
\end{defn}

A path space is a subspace of $\T_n$ consisting of the closed orthants corresponding to the trees with interior edges $E_i \cup F_i$ for all $0 \leq i \leq k$.  The intersection of $\Or_i$ and $\Or_{i+1}$ is the orthant $\mathcal{O}(E_{i+1} \cup F_i)$.  If the $i^{th}$ step transforms the tree with splits $E_{i-1} \cup F_{i-1}$ into the tree with splits $E_i \cup F_i$, then at this step we remove the splits $A_i \triangleq E_{i-1} \backslash E_i$ and add the splits $B_i \triangleq F_i \backslash F_{i-1}$.  Using this notation, the $i$-th orthant corresponds to the splits $B_1 \cup ... \cup B_i \cup A_{i+1} \cup .... \cup A_k$.  To simplify notation, let $\Or_i = \Or(E_i \cup F_i)$ and $\Or_i' = \Or(E_i' \cup F_i')$.

The following property of path spaces follows directly from the definition.

%
%
\begin{prop}{}
\label{prop:path_space_property}
Let $\cup_{i=0}^k \mathcal{O}(E_i \cup F_i)$ be a path space between $T_1$ and $T_2$.  Then $E_i \subseteq C_{\Sigma_1}(F_i)$ and $F_i \subseteq C_{\Sigma_2}(E_i)$ for all $0 \leq i \leq k$.
\end{prop}

%
%

\begin{rem}{}
\label{rem:equality_in_path_spaces}
In order to ensure a unique representation of a path space in terms of $E_i$'s and $F_i$'s, we make the inclusions strict in the definition of a path space.  However, if we have sets of splits $\Sigma_1 = E_0 \supseteq E_1 \supseteq \cdots \supseteq E_{k-1} \supseteq  E_k = \varnothing$ and $\varnothing = F_0 \subseteq F_1 \subseteq \cdots \subseteq F_{k-1} \subseteq F_k = \Sigma_2$ such that $E_i$ and $F_i$ are compatible for all $0 \leq i \leq k$, then $\cup_{i=0}^k \mathcal{O}(E_i \cup F_i)$ can be represented by some $\cup_{i = 0}^{k'}\Or_i'$ such that $\Sigma_1 = E_0' \supset E_1' \supset \cdots \supset E_{k'-1}' \supset  E_{k'}' = \varnothing$ and $\varnothing = F_0' \subset F_1' \subset \cdots \subset F_{k'-1}' \subset F_{k'} = \Sigma_2$.  To do this, we group consecutive $E_i$'s and $F_i$'s into larger sets that are still mutually compatible with each other, until we have a path space.
\end{rem}

A path space is \emph{maximal} if it is not contained in any other path space.  Since \cite[Proposition 4.1]{BHV01} proves that the geodesic is contained in a path space, it must be contained in some maximal path space.  We now characterize the maximal path spaces using split compatibility.
%
%
 \begin{thm}{}
\label{th:max_path_space_conditions}
The maximal path spaces from $T_1$ to $T_2$ are exactly those path spaces $\cup_{i=0}^k \Or_i$ such that:
\begin{enumerate}
\item $E_i = C_{\Sigma_1}(F_i)$, for all $0 \leq i \leq k$. 
\item $F_i = C_{\Sigma_2}(E_i)$, for all $0 \leq i \leq k$.
\item for all $1 \leq i \leq k$, the set of splits $B_i$ is a minimal element in the incompatibility poset $P (A_i \cup ... \cup A_k, B_i \cup ... \cup B_k )$\end{enumerate}
\end{thm}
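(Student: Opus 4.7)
The plan is to prove both directions of the equivalence by contraposition, in each case exhibiting a strict refinement of the given path space when some condition fails. The key structural tools are the monotonicity and partitioning properties of compatibility and crossing sets, together with the closure operator $A \mapsto \overline{A}$ on $E_{T_2}$.

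For the necessity direction (a maximal path space satisfies the three conditions), suppose first that condition~(1) fails, so $E_i \subsetneq C_{T_1}(F_i)$ for some smallest $i \geq 1$. Then any $e \in C_{T_1}(F_i) \setminus E_i$ lies in $E_{i-1}$ by opposite monotonicity of compatibility applied to $F_{i-1} \subseteq F_i$, and $e$ is compatible with $F_{i-1}$. I then refine step $i$ by inserting an orthant $\Or(E' \cup F')$ with $E_i \subsetneq E' \subseteq E_{i-1}$ and $F_{i-1} \subseteq F' \subsetneq F_i$, choosing $E'$ to contain $e$ and $F'$ as needed (a proper subset of $F_i$ containing $F_{i-1}$), so that both the $E$- and $F$-chains remain strictly monotone. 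A symmetric argument from failure of condition~(2) inserts an orthant via some $f \in C_{T_2}(E_i) \setminus F_i$. If condition~(3) fails, then the splits of $F_i \setminus F_{i-1}$ do not all sit at a common minimal level of the incompatibility poset $P(T(E_{i-1}), T(E_{T_2} \setminus F_{i-1}))$, so some proper nonempty $F' \subsetneq F_i \setminus F_{i-1}$ corresponds to a strictly smaller crossing set, and the matched $E$-side removal $E_{i-1} \setminus X_{T(E_{i-1})}(F')$ yields an intermediate orthant refining step~$i$.

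For sufficiency (the conditions imply maximality), suppose conversely that $\cup_i \Or_i$ satisfies all three conditions but is strictly contained in a path space $\cup_j \Or_j'$. Some $\Or_j'$ must then be sandwiched strictly between two consecutive orthants $\Or_{i-1}$ and $\Or_i$; the endpoint cases are excluded by $E_0 = E_{T_1}$, $F_k = E_{T_2}$, and conditions~(1)--(2). Writing its split set as $E' \cup F'$ with $E_i \subseteq E' \subseteq E_{i-1}$ and $F_{i-1} \subseteq F' \subseteq F_i$, at least one inclusion is strict on each side. The proper subset $F' \setminus F_{i-1}$ of $F_i \setminus F_{i-1}$ must then have a strictly smaller crossing set in $P(T(E_{i-1}), T(E_{T_2} \setminus F_{i-1}))$, using the partition identity $C_{T(E_{i-1})}(F') \sqcup X_{T(E_{i-1})}(F') = E_{i-1}$ to tie the $E$-side drop to the $F$-side gain. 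This directly contradicts the minimality-and-equivalence clause of condition~(3).

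The main obstacle will be carrying through the refinement argument cleanly at the boundary cases of condition~(3): translating ``minimal and equivalent elements'' of the residual-tree incompatibility poset into the precise compatibility and strict-inclusion bookkeeping that certifies the inserted orthant fits into an enlarged path space. The compatibility checks rely repeatedly on opposite monotonicity of $C_B(\cdot)$ and on the partition identity, and the same manipulations should cover awkward cases such as $|E_{i-1} \setminus E_i| = 1$ or $|F_i \setminus F_{i-1}| = 1$, in which the refined step must necessarily adjust both sides simultaneously rather than leaving either coordinate chain momentarily constant.
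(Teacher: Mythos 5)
Your high-level plan matches the paper's: prove necessity by showing that failure of each condition permits a strict enlargement, and prove sufficiency by showing the three conditions block any enlargement (reducing to a contradiction with condition~(3)). However, two concrete gaps separate your sketch from a complete argument.

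\textbf{Necessity via strict insertion does not handle conditions (1) and (2).} Your refinement for a condition-(1) failure inserts an orthant $\Or(E' \cup F')$ with $E_i \subsetneq E' \subseteq E_{i-1}$ and $F_{i-1} \subseteq F' \subsetneq F_i$ and claims both chains ``remain strictly monotone.'' For the inserted orthant not to be swallowed by $\Or_{i-1}$ you need $F' \supsetneq F_{i-1}$, and for it not to be swallowed by $\Or_i$ you need $E' \supsetneq E_i$; but taking $E' = E_{i-1}$ is usually impossible (those edges are generally incompatible with $F' \setminus F_{i-1}$), so you need $E_i \subsetneq E' \subsetneq E_{i-1}$ and $F_{i-1} \subsetneq F' \subsetneq F_i$ simultaneously, which requires $|E_{i-1}\setminus E_i|\ge 2$ and $|F_i\setminus F_{i-1}|\ge 2$. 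Both of these can fail when condition~(1) (or (2)) does, so the strict insertion you describe can be impossible. The paper avoids this entirely: it \emph{enlarges} orthants, setting $\Or_\ell' = \Or_\ell + \Or(E')$ for $\ell \le j$ with $E' = C_{T_1}(F_j)\setminus E_j$, keeping the same index set. This adds a strictly larger orthant at position $j$, produces at worst a \emph{relaxed} path space (non-strict inclusions), and then invokes Lemma~\ref{lem:path_spaces_dont_need_strict_subsets} to recover a genuine path space that strictly contains the original. Deferring this to ``the manipulations should cover awkward cases'' leaves the key step unproved; as stated, the construction you propose does not certify a strict enlargement in those cases.

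\textbf{Sufficiency is missing the ``no orthant is strictly swallowed'' lemma.} You assert that if $\cup_i \Or_i$ satisfies the three conditions and is strictly contained in a path space $\cup_j \Or_j'$, then some $\Or_j'$ is sandwiched strictly between consecutive $\Or_{i-1}$ and $\Or_i$. This needs a prior step that you elide: a priori an orthant $\Or_l'$ of the larger space could \emph{strictly contain} some $\Or_j$ rather than sit between consecutive orthants. The paper proves this cannot happen, using exactly conditions~(1) and~(2) to force $E_l'\subseteq C_{T_1}(F_l')\subseteq C_{T_1}(F_j)=E_j$ and then $\Or_j = \Or_l'$. Only once each $\Or_j$ is known to appear verbatim in the larger chain can you conclude that the extra orthants are genuinely ``between'' existing ones. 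Your sentence ``the endpoint cases are excluded by \dots and conditions~(1)--(2)'' gestures in this direction but handles only the boundary positions, not the strict-containment case in the interior.

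Finally, a smaller note: your Case~(3) refinement must be run under the explicit assumption that conditions~(1) and~(2) already hold (as the paper's Case~3 does); without them you cannot conclude $X_{E_{i-1}}(F') \neq \varnothing$ or tie the dropped $E$-splits exactly to the gained $F$-splits. Worth stating, as otherwise the argument silently relies on facts it has not established.
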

\begin{proof}
Let $\mathcal{M}$ be the set of path spaces described in the theorem.  We first show, by contradiction, that all path spaces in $\mathcal{M}$ are maximal.  Suppose not.  Then there exists some path space $M = \cup_{i=0}^k \Or_i \in \mathcal{M}$ that is strictly contained in another path space $S' = \cup_{i = 0}^{k'} \Or_i'$.  

If $\Or_j \subseteq \Or_l'$ for some $0 \leq j \leq k$ and some $0 \leq l' \leq k'$, then since $\Sigma_1$ and $\Sigma_2$ are disjoint, we have $E_j \subseteq E_l'$ and $F_j \subseteq F_l'$.  By Proposition~\ref{prop:path_space_property} and the opposite monotonicity of compatibility sets, $F_l' \subseteq C_{\Sigma_2}(E_l') \subseteq C_{\Sigma_2}(E_j)  = F_j$, where the last equality follows from Condition 2 on path spaces in $\mathcal{M}$.  Hence, $F_l' = F_j$.  Similarly, $E_l' \subseteq C_{\Sigma_1}(F_l') = C_{\Sigma_1}(F_j) = E_j$, where the last equality follows from Condition 1.  Therefore, $E_l' = E_j$, and hence $\Or_j = \Or_l'$. 

Therefore, every orthant of $M$ is also an orthant of $S'$, and thus $S'$ must contain at least one other orthant not in $M$.  Let $j$ be the smallest index for such an orthant.  More specifically, the orthant $\Or_{j-1}$ is in $M$ and $S'$, but $\Or'_j, \Or_{j+1}', ..., \Or_{j+l -1}'$ are not in $M$ and $\Or_j = \Or_{j+l}'$.  Then by definition of $M$ and $S'$, $B_j' \subseteq B_j$ and $A_j' \subseteq A_j$.  By Condition 3 and the definition of the incompatibility poset, $X_{A_j \cup \ldots \cup A_k}(B_j') = X_{A_j \cup \ldots \cup A_k}(B_j)$.  Therefore, $A_j' = A_j$, which implies that $\Or_j' \subseteq \Or_j$, a contradiction.


Let $S = \cup_{i=0}^k \Or_i$ be some path space that is not in $\mathcal{M}$. We will now prove that $S$ is contained in another path space, $S'$, and hence is not maximal.  Since $S \notin \mathcal{M}$, at least one of the three conditions does not hold.

Case 1: There exists a $0 \leq j \leq k$ such that $E' = C_{\Sigma_1}(F_j) \bs E_j$ is not empty. That is, Condition 1 does not hold. \\
We now construct a path space in which the splits $E'$ are dropped at the $j$-th step instead of an earlier one.  Define $S' = \cup_{i = 0}^k \Or'_i$, where
\begin{align*}
\Or_i'=
\begin{cases}
\Or_i + \Or(E') & \text{if $0 \leq i \leq j$} \\
\Or_i & \text{if $j < i \leq k$}
\end{cases}
\end{align*} 
Since we have only added dimensions to orthants in $S$ to define $S'$ and $\Or_i' \subset \Or_i + \Or(E')$, we have $S \subset S'$. It remains to show that $S'$ is a path space.  By definition, $E'$ is compatible with $F_j$, and hence  $F_0 \subset ... \subset F_{j-1} \subset F_j$, so the splits specifying each orthant of $S'$ are compatible.  Since $\Sigma_1 = E'_0 \supseteq E'_1 \supseteq ... \supseteq E'_j \supset ... \supset E'_k = \varnothing$,  then by Remark~\ref{rem:equality_in_path_spaces}, $S'$ can be relabelled as a path space and hence $S$ is not a maximal path space.

Case 2: There exists $0 \leq j \leq k$ such that $F' = C_{\Sigma_2}(E_j) \bs F_j$ is not empty.  That is, Condition 2 does not hold. \\
We will now construct a path space in which the splits $F'$ are added to the tree at the $j$-th step, instead of a later step.  Define $S' = \cup_{i = 0}^k \Or'_i$, where 
\begin{align*}
\Or_i'=
\begin{cases}
\Or_i & \text{if $0 \leq i < j$} \\
\Or_i + \Or(F') & \text{if $j \leq i \leq k$}
\end{cases}
\end{align*}

By analogous reasoning to Case 1, $S'$ is a path space strictly containing $S$, and therefore $S$ is not maximal.


Case 3:  Let $P = P(E_{j-1}, \Sigma_2 \bs F_{j-1}) = P(A_j \cup ... \cup A_k, B_j \cup ... \cup B_k)$.  Neither Case 1 nor Case 2 holds, and, for some $1 \leq j \leq k$, there exist splits $f \in B_j$ and $g \in B_j \cup ... \cup B_k$ such that $g < f$ in $P$.  That is, Conditions 1 and 2 hold, but Condition 3 does not hold.

We now construct a path space with an extra orthant, which we get by adding the splits $g$ and $f$ in two distinct steps, instead of during the same step.
Define $S' = \cup_{i = 0}^{k+1} \Or'_i$, where 
\begin{align*}
\Or_i'=
\begin{cases}
\Or_i & \text{if $0 \leq i < j$} \\
\Or \left ( E_{i-1} \bs X_{E_{i-1}}(g) \right) +  \Or \left ( \overline{F_{i-1} \cup g} \right ) & \text{if i =j} \\
\Or_{i-1} & \text{if $j < i \leq k$}
\end{cases}
\end{align*}
We will first show that $\Or_j'$ is neither contained in nor contains any orthant from $S$, by showing that $E_{j-1}' \supset E_j' \supset E_{j+1}'$ and $F_{j-1}' \subset F_j' \subset F_{j+1}$.  We must have $X_{E_{j-1}}(g) \neq \varnothing$, or else $g \in C_{\Sigma_2}(E_{j-1}) \bs F_{j-1}$, implying Case 2 holds, which is a contradiction.  This implies that $E_{j-1} \supset E_{j-1} \bs X_{E_{j-1}}(g)$, or $E'_{j-1} \supset E'_j$.  Since $g < f$ in $P$, we have $X_{E_{j-1}}(g) \subset X_{E_{j-1}}(f)$.  To add $f$ at step $j$ in $S$, we must drop all splits in $E_{j-1}$ that are incompatible with $f$, so $X_{E_{j-1}}(f) \subseteq A_j$.  Along with the previous statement, this implies that $X_{E_{j-1}}(g) \subset A_j$, and hence $E_{j+1}' \subset E'_j$.  Therefore, we have shown that $E_{j-1}' \supset E_j' \supset E_{j+1}'$, as desired.

Since $g \notin F_{j-1}$, we have $F_{j-1} \subset \overline{F_{j-1} \cup g}$, and hence $F_{j-1}' \subset F_j'$.  It now remains to show that $F_j' \subset F_{j+1}'$, which we will do by showing that $f \in F_j$ but $f \notin \overline{F_{j-1} \cup g}$.  The first statement follows because $f \in B_j = F_j \bs F_{j-1}$.  For the second statement, $g < f$ in $P$ implies $X_{E_{j-1}}(g) \subset X_{E_{j-1}}(f)$.  Since $S$ is a path space, $X_{E_{j-1}}(F_{j-1}) = \varnothing$.  Thus, $X_{E_{j-1}}(F_{j-1}) \subset X_{E_{j-1}}(g) \subset X_{E_{j-1}}(f)$, which implies that $X_{\Sigma_1}(f) \nsubseteq X_{\Sigma_1}(F_{j-1}) \cup X_{\Sigma_1}(g)$, and hence $f \notin \overline{F_{j-1} \cup g}$.  Therefore, $F_{j-1}' \subset F_j' \subset F_{j+1}$.

Finally we show that the splits in $\Or_j'$ are mutually compatible.  By the definitions, $C_{\Sigma_1}(\overline{F_{j-1} \cup g}) = C_{\Sigma_1}(F_{j-1}) \cap C_{\Sigma_1}(g) \supseteq E_{j-1} \bs X_{\Sigma_1}(g) \supseteq E_{j-1} \bs X_{E_{j-1}}(g)$, and hence the splits of $\Or_j'$ are mutually compatible.  The other orthants remain unchanged, and thus $S'$ is a path space.  Since $S'$ strictly contains $S$, the path space $S$ is not maximal.
\end{proof} 
Recall that in a poset $P$, $x < y$ is a \emph{cover relation}, or \emph{$y$ covers $x$}, if there does not exist any $z \in P$ such that $x < z < y$.  A \emph{chain} is a totally ordered subset of a poset. A chain is \emph{maximal} when no other elements from $P$ can be added to that subset.  See \cite[Chapter 3]{Stanley1} for an exposition of partially ordered sets. 
\begin{thm}{} \label{th:one_to_one}
Let $g: K(\Sigma_1, \Sigma_2) \to \T_n$ be given by $g(L) = \Or_L$, where $\Or_L = \Or( C_{\Sigma_1}(L)  \cup L)$, for any element $L \in K(\Sigma_1, \Sigma_2)$.  For any maximal chain $L_0 < L_1 < ... < L_k$ in $ K(\Sigma_1, \Sigma_2)$, define $h(L_0 < L_1 < ... < L_k) = \cup_{i=0}^k g(L_i)$.  Then $\cup_{i=0}^k g(L_i) = \cup_{i=0}^k \Or_{L_i}$ is a maximal path space and $h$ is a bijection between maximal path spaces from $T_1$ to $T_2$ and maximal chains in $K(\Sigma_1, \Sigma_2)$.
\end{thm}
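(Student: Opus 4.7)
The plan is to prove Theorem~\ref{th:one_to_one} by (i) checking that $h$ maps maximal chains to maximal path spaces, (ii) exhibiting a natural inverse $h^{-1}$ from maximal path spaces to maximal chains, and (iii) verifying the two compositions are identities. The key identity underlying the whole argument is
\[ \overline{A} = C_{T_2}(C_{T_1}(A)) \quad \text{for every } A \subseteq E_{T_2}, \]
which I would establish first: $f \in C_{T_2}(C_{T_1}(A))$ exactly when every $e \in E_{T_1}$ incompatible with $f$ is also incompatible with some element of $A$, i.e., $X_{T_1}(f) \subseteq X_{T_1}(A)$. This lets me freely translate between closedness in $K(T_1, T_2)$ and Conditions~1--2 of Theorem~\ref{th:max_path_space_conditions}.

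For Direction~1 (maximal chain $\to$ maximal path space), given $A_0 < A_1 < \cdots < A_k$ I set $F_i = A_i$ and $E_i = C_{T_1}(F_i)$. The endpoints $A_0 = \varnothing$ and $A_k = E_{T_2}$ come from maximality of the chain, using that $T_1$ and $T_2$ share no splits and that no split of $E_{T_1}$ can be compatible with all of $E_{T_2}$ (since a tree has only $n-2$ non-trivial splits). Strict inclusions $E_i \supsetneq E_{i+1}$ follow by contrapositive: $E_i = E_{i+1}$ would give $X_{T_1}(F_i) = X_{T_1}(F_{i+1})$, hence $\overline{F_i} = \overline{F_{i+1}}$, hence $F_i = F_{i+1}$, contradicting the strict chain. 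Condition~1 holds by construction, and Condition~2 reduces to $F_i = \overline{F_i}$ via the identity above.

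The core work is Condition~3: the splits in $A_i \setminus A_{i-1}$ must be minimal and equivalent in $P\bigl(T(E_{i-1}), T(E_{T_2} \setminus F_{i-1})\bigr)$. Equivalence follows from the cover: for any $f, g \in A_i \setminus A_{i-1}$ the cover $A_{i-1} \lessdot A_i$ gives $A_i = \overline{A_{i-1} \cup \{f\}}$, so $X_{T_1}(g) \subseteq X_{T_1}(A_{i-1}) \cup X_{T_1}(f)$, whence $X_{E_{i-1}}(g) \subseteq X_{E_{i-1}}(f)$, and symmetrically. For minimality, suppose $g \in E_{T_2} \setminus F_{i-1}$ satisfies $X_{E_{i-1}}(g) \subsetneq X_{E_{i-1}}(f)$ for some $f \in A_i \setminus A_{i-1}$. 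Decomposing $X_{T_1}(g)$ along the partition $E_{T_1} = E_{i-1} \cup X_{T_1}(F_{i-1})$ gives $X_{T_1}(g) \subseteq X_{E_{i-1}}(f) \cup X_{T_1}(F_{i-1}) \subseteq X_{T_1}(A_i)$, so $g \in A_i$. Then $B = \overline{A_{i-1} \cup \{g\}}$ is closed with $A_{i-1} \subsetneq B \subseteq A_i$, and a witness $e \in X_{E_{i-1}}(f) \setminus X_{E_{i-1}}(g)$ shows $f \notin B$, so $B \subsetneq A_i$, contradicting the cover.

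For Direction~2 (maximal path space $\to$ maximal chain), given $(E_i, F_i)$ satisfying Theorem~\ref{th:max_path_space_conditions}, set $A_i = F_i$. Each $A_i$ is closed because $\overline{F_i} = C_{T_2}(C_{T_1}(F_i)) = C_{T_2}(E_i) = F_i$ by Conditions~1--2. To see the chain is maximal, suppose toward contradiction that some closed $B$ satisfies $F_i \subsetneq B \subsetneq F_{i+1}$; pick $f \in B \setminus F_i$ and $f' \in F_{i+1} \setminus B$. By Condition~3, $X_{E_i}(f) = X_{E_i}(f')$, and splitting $X_{T_1}(f')$ across $E_i$ and $X_{T_1}(F_i)$ yields $X_{T_1}(f') \subseteq X_{T_1}(F_i) \cup X_{E_i}(f) \subseteq X_{T_1}(B)$, forcing $f' \in \overline{B} = B$, a contradiction. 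Finally, $h \circ h^{-1} = \mathrm{id}$ and $h^{-1} \circ h = \mathrm{id}$ are routine: $\Or_{F_i} = \Or(C_{T_1}(F_i) \cup F_i) = \Or(E_i \cup F_i) = \Or_i$ by Condition~1, and $A_i = F_i$ by construction. I expect the main obstacle to be the minimality half of the Condition~3 verification in Direction~1, where one must transfer closure-based statements in $K(T_1, T_2)$ into crossing-set statements inside the restricted poset, which relies on careful bookkeeping with the partition $E_{T_1} = E_{i-1} \cup X_{T_1}(F_{i-1})$.
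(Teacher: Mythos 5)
Your proof follows the same approach as the paper: set $F_i = A_i$ and $E_i = C_{T_1}(A_i)$, check the three conditions of Theorem~\ref{th:max_path_space_conditions}, then invert by reading $F_i$ off a maximal path space. Two small refinements are worth noting: you isolate the identity $\overline{A} = C_{T_2}(C_{T_1}(A))$ as a reusable lemma (the paper carries out this translation in-line), and you explicitly verify the \emph{equivalence} half of Condition~3 via the cover relation forcing $A_i = \overline{A_{i-1} \cup \{f\}}$ --- the paper's verification only addresses minimality and leaves equivalence (and the fact that $g \in F_i$, needed for its chain $\overline{F_{i-1} \cup g \cup f} \leq F_i$) implicit.
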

%
%
\begin{proof}
The map $g$ is one-to-one, because if $L \neq L'$, then $\Or_L \neq \Or_{L'}$.  We now show that $h$ maps maximal chains in $ K(\Sigma_1, \Sigma_2)$ to maximal path spaces.

Let $\varnothing = L_0 < L_1 < ... < L_k = \Sigma_2$ be a maximal chain in $ K(\Sigma_1, \Sigma_2)$.  For every $0 \leq i \leq k$, let $F_i = L_i$ and $E_i = C_{\Sigma_1}(L_i)$.  We now show that $\cup_{i=0}^k \Or_i$ is a path space.  Since $K(\Sigma_1, \Sigma_2)$ is the closed sets of $\Sigma_2$ ordered by inclusion, $F_i \subset F_{i+1}$ for all $0 \leq i < k$.  By the monotonicity of crossing sets, $X_{\Sigma_1}(L_i) \subseteq X_{\Sigma_1}(L_{i+1})$.  If $X_{\Sigma_1}(L_i) = X_{\Sigma_1}(L_{i+1})$, then $L_{i+1} \subseteq \overline{L_i} = L_i$, since $L_i$ is a closed set.  This is a contradiction, and therefore, $X_{\Sigma_1}(L_i) \subset X_{\Sigma_1}(L_{i+1})$.  This implies that $C_{\Sigma_1}(L_i) \supset C_{\Sigma_1}(L_{i+1})$ by the partitioning property, and hence $E_i \supset E_{i+1}$ for all $0 \leq i < k$.  

Since $L_0 = \varnothing$, $E_0 = C_{\Sigma_1}(L_0) = \Sigma_1$, and since $L_k = \Sigma_2$, $E_k = C_{\Sigma_1}(L_k) = \varnothing$, or else $T_2$ would contain more than $n-2$ splits.  Finally, for all $0 \leq i \leq k $, $E_i$ is compatible with $F_i$ by definition.  Therefore, $\cup_{i=0}^k \mathcal{O}(E_i \cup F_i)$ is a path space.

We will now show that $\cup_{i=0}^k \Or_i$ satisfies the three conditions of Theorem~\ref{th:max_path_space_conditions}, and hence is maximal.  Since $E_i = C_{\Sigma_1}(F_i)$, Condition 1 is met.  By Proposition~\ref{prop:path_space_property}, $F_i \subseteq C_{\Sigma_2}(E_i)$.  We now show that  $F_i \supseteq C_{\Sigma_2}(E_i)$.  For any $f \in C_{\Sigma_2}(E_i)$, by definition of the crossing set, $X_{\Sigma_1}(f) \cap E_i = \varnothing$.  Since $X_{\Sigma_1}(L_i)$ and $C_{\Sigma_1}(L_i) = E_i$ partition $\Sigma_1$, then $X_{\Sigma_1}(f) \subseteq X_{\Sigma_1}(L_i)$.  This implies that $f \in \overline{L_i} = L_i = F_i$, and hence Condition 2 holds. 

To show Condition 3, suppose that for some $1 \leq j \leq k$, there exists $f \in B_j$ and a minimal element $g$ in $P(E_{j-1}, \Sigma_2 \bs F_{j-1})$ such that $g < f$ in $P(E_{j-1},\Sigma_2 \bs F_{j-1})$.
As shown in the proof of Theorem~\ref{th:max_path_space_conditions}, $F_{i-1} \subset \overline{F_{i-1} \cup g} \subset F_i$.  This implies that $L_{i-1}  < \overline{F_{i-1} \cup g} <  L_i$, 
and hence $L_i < L_{i-1}$ is not a cover relation, which is a contradiction.  Therefore, Condition 3 also holds, and $\cup_{i=0}^k \Or_i$ is a maximal path space.

So as claimed, if $L_0 < L_1 < ... < L_k$ is a maximal chain, then $h(L_0 < ... <L_k)$ is a maximal path space.  It remains to show that $h$ is a bijection.  For any maximal path space $\cup_{i=0}^k \Or_i$, $F_i < F_{i+1}$ is a cover relation for all $0 \leq i < k$ since for any $f \in B_i$, $\overline{F_i \cup f} = F_{i+1}$ by Condition 3 of Theorem \ref{th:max_path_space_conditions}.  This implies that $\varnothing = F_0 < F_1 < ... < F_k = \Sigma_2$ is a maximal chain in $ K(\Sigma_1, \Sigma_2)$ such that $h(F_0 < F_1 < ... < F_k) = \cup_{i=0}^k \Or_i$, and hence $h$ is onto.  We have that $h$ is one-to-one, because $g$ is one-to-one.  Therefore, $h$ is a bijection, which establishes the correspondence.
\end{proof}
\begin{figure}[ht]
\centering
\subfigure[Tree $T_1$.]{
\includegraphics[scale=0.3]{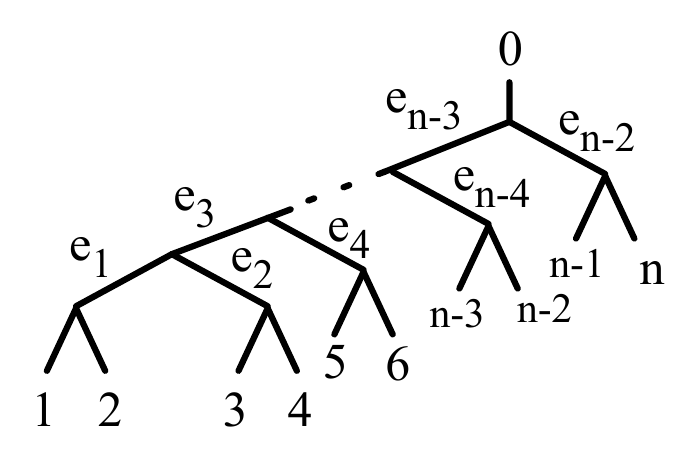}
\label{fig:dynamic_counter_ex_T1}
}
\subfigure[Tree $T_2$.]{
\includegraphics[scale=0.3]{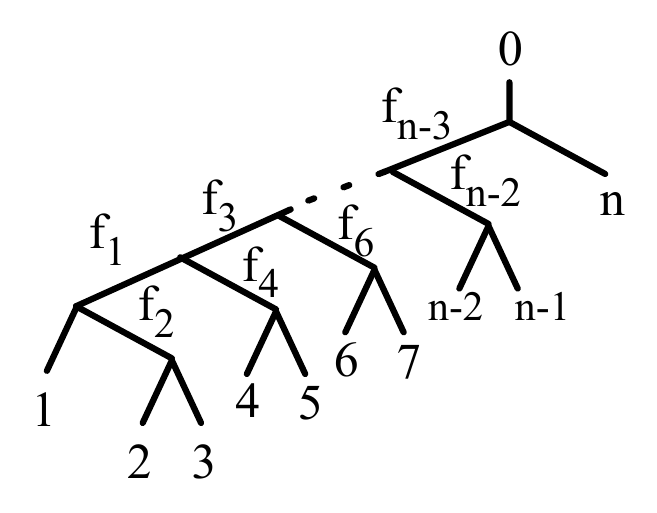}
\label{fig:dynamic_counter_ex_T2}
}
\subfigure[Incompatibility poset $P(\Sigma_1, \Sigma_2)$.]{
\includegraphics[scale=0.4]{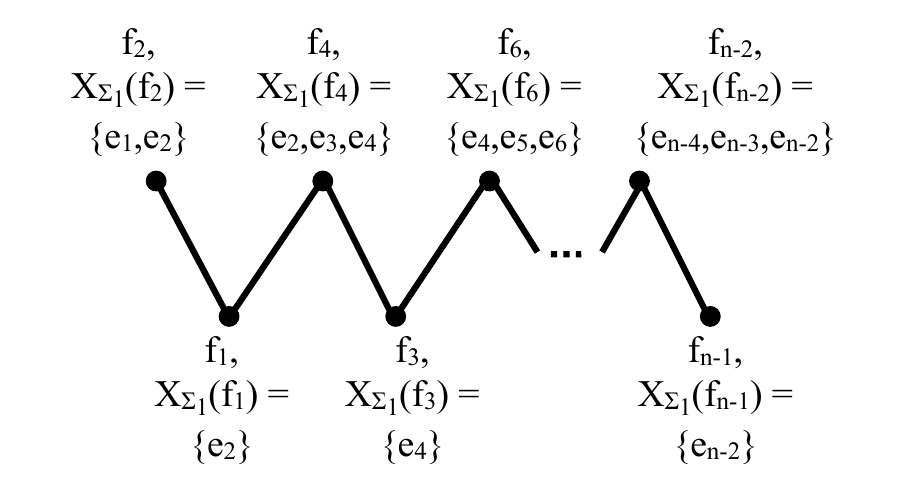}
\label{fig:dynamic_counter_ex_incomp}
}
\caption{A family of trees whose path poset is exponential in the number of leaves.}
\label{fig:dynamic_counter_ex}
\end{figure}
\begin{rem} \label{rem:exponential_path_poset}
The number of elements in a path poset $K(\Sigma_1, \Sigma_2)$ can be exponential in the number splits in the two sets.  For example, for any even positive integer $n$, consider the trees $T_1 = (X, \Sigma_1)$ and $T_2 = (X, \Sigma_2)$ depicted in Figures~\ref{fig:dynamic_counter_ex_T1} and \ref{fig:dynamic_counter_ex_T2}.  Their incompatibility poset is given in Figure~\ref{fig:dynamic_counter_ex_incomp}.  Let $W$ be the set of minimal elements in $P(\Sigma_1, \Sigma_2)$.  Then $\abs{W} = \frac{n-2}{2}$.  Each subset of $W$ is a distinct closed set, and hence an element in $K(\Sigma_1, \Sigma_2)$.  This implies there are at least $2^{(n-2)/2}$ elements in $K(\Sigma_1, \Sigma_2)$, and hence also an exponential number of maximal chains.
\end{rem}
%
%
\section{Geodesics in Path Spaces}
Given a path space, this section shows how to find the locally shortest path, or \emph{path space geodesic}, between $T_1$ and $T_2$ within that space in linear time.  We do this by transforming the problem into a \emph{Euclidean shortest-path problem with obstacles} (\cite{Mitchell98} and references) in Theorem~\ref{th:homeo_to_Rm}.  We next reformulate the problem as a touring problem \cite{DELM03}.  A \emph{touring problem} asks for the shortest path through Euclidean space that visits a sequence of regions in the prescribed order.  Lemma~\ref{lem:for_key_lem} and Lemma~\ref{lem:inductive_step_for_shortest_path_alg} give conditions on the path solving the touring problem.  The linear algorithm for computing the path space geodesic is given in Section~\ref{sec:linalg}, with Theorem~\ref{th:get_dist} proving its correctness. 
\subsection{Two Equivalent Euclidean Space Problems}
Let $T_1$ and $T_2$ be two trees with no common splits, and let $S = \cup_{i=0}^k \Or(E_i \cup F_i)$ be a path space between them.  Define the \emph{path space geodesic between $T_1$ and $T_2$ through $S$} to be the shortest path between $T_1$ and $T_2$ contained in $S$.  Let $d_S(T_1, T_2)$ be the length of this path.

We will now show that the path space geodesic between $T_1$ and $T_2$ through a path space containing $k+1$ orthants is contained in a subspace of $\T_n$ isometric to the following subset of $\R^k$.  For $0 \leq i \leq k$, define the orthant
\[V_i = \{(x_1, ..., x_k) \in \R^k : \text{$x_j \leq 0$ if $j \leq i$ and $x_j \geq 0$ if $j > i$} \}. \]
Let $V(\R^k) = \cup_{i=0}^k V_i$.

We prove three properties of path space geodesics, and hence also geodesics, in Proposition~\ref{prop:local_geo_1}, Proposition~\ref{prop:local_geo_2}, and Corollary~\ref{cor:local_geo_3}.  
These properties imply that the path space geodesic is a straight line except possibly at the intersections between orthants, where it may bend.  Furthermore, if we know the point on the path space geodesic at which an edge is added or dropped, then we know the length of that edge at any other point on the path space geodesic. 
Analogous properties were proven by Vogtmann \cite{V07} for geodesics.
%
\begin{prop}
The path space geodesic is a straight line in each orthant that it traverses.
\label{prop:local_geo_1}
\end{prop}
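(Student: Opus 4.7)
The plan is to exploit two facts: each orthant $\Or_i$ appearing in $S$ is a convex subset of Euclidean space (it is the intersection of a coordinate subspace of $\R^{\mathcal{N}}$ with the nonnegative part), and the path space geodesic $\gamma$ from $T_1$ to $T_2$ through $S$ is, by definition, a path in $S$ of minimal Euclidean length summed over its orthant intersections. The argument is the standard convex shortcut argument: if the restriction of $\gamma$ to some orthant were not a straight line, a straight chord would give a strictly shorter path still lying in $S$, contradicting minimality.

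More precisely, first I would fix an orthant $\Or_i$ that $\gamma$ traverses, and consider the preimage $\gamma^{-1}(\Or_i) \subseteq [0,1]$, which is closed because $\Or_i$ is a closed subset of $\T_n$. Let $s = \inf \gamma^{-1}(\Or_i)$ and $t = \sup \gamma^{-1}(\Or_i)$, and set $p = \gamma(s)$ and $q = \gamma(t)$; both points lie in $\Or_i$. Since $\Or_i$ is convex in $\R^{\mathcal{N}}$, the straight segment $\sigma$ from $p$ to $q$ lies entirely in $\Or_i \subseteq S$. Replace the portion of $\gamma$ between parameters $s$ and $t$ by $\sigma$ to obtain a new path $\gamma'$ from $T_1$ to $T_2$ in $S$.

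Next I would compare lengths. The length of $\gamma'$ decomposes as the length of $\gamma$ outside $[s,t]$, plus the Euclidean length $\norm{q-p}$ of $\sigma$. The length of $\gamma$ between $s$ and $t$ is at least the sum of Euclidean lengths of its pieces in the various orthants it visits on that sub-interval, and by the (iterated) triangle inequality applied in $\R^{\mathcal{N}}$, this sum is at least $\norm{q-p}$, with equality exactly when that sub-arc is the straight segment $\sigma$. Hence $\gamma'$ is no longer than $\gamma$, and strictly shorter unless $\gamma|_{[s,t]} = \sigma$. By minimality of $\gamma$ among paths in $S$ from $T_1$ to $T_2$, we conclude $\gamma|_{[s,t]} = \sigma$, so the trace of $\gamma$ in $\Or_i$ is the single straight segment from $p$ to $q$.

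The main subtlety is not difficulty of the inequality but correctly handling the possibility that $\gamma$ could a priori enter and exit $\Or_i$ more than once; taking the earliest entry $s$ and latest exit $t$ sidesteps this, since the straight chord from $p$ to $q$ remains inside the convex $\Or_i$ and therefore inside $S$, regardless of how $\gamma$ wandered in between. A minor point to verify is that the measurement of length in $\T_n$ agrees, on any portion of $\gamma$ contained in a single orthant, with Euclidean length in that orthant; this is immediate from the definition of the metric on $\T_n$ given in Section~2.2.
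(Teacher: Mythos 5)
Your proof is correct and is essentially the paper's argument — a convexity-based shortcut. The paper's version is a one-sentence sketch ("replace the path within each orthant with a straight line \dots to get a shorter path"), whereas you additionally address the a priori possibility that the path re-enters an orthant by shortcutting from first entry to last exit, which cleanly yields that the full intersection with each orthant is a single segment; this is a reasonable tightening of the same idea rather than a different approach.
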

\begin{proof}
If not, replace the path within each orthant with a straight line, which enters and exits the orthant at the same points as the original path, to get a shorter path.
\end{proof}
%

\begin{prop}
Moving along the path space geodesic, the length of each non-zero edge changes in the trees on it at a constant rate with respect to the geodesic arc length.  That is, for any edge $e \in \Sigma_1 \cup \Sigma_2$, there exists a constant $c_e >0$ such that $\frac{\abs{e}_T}{d_S(T_1, T)} = c_e$ for any tree $T$ on the geodesic that contains edge $e$.
\label{prop:local_geo_2}
\end{prop}
\begin{proof}
By Proposition~\ref{prop:local_geo_1}, each edge must shrink or grow at a constant rate with respect to the other edges within each orthant, but these rates can differ between orthants.  That is, Proposition~\ref{prop:local_geo_1} allows the constant $c_e$ to depend on the orthant containing $T$, but we will now show that it does not.  It suffices to consider when the geodesic goes through the interiors of the two adjacent orthants $\Or_{i-1} = \Or(E_{i-1} \cup F_{i-1})$ and $\Or_i = \Or(E_i \cup F_i)$, and bends in the intersection of these two orthants.  Let $\mathbf{a}$ be the point at which the geodesic enters $\Or_{i-1}$, and let $\mathbf{b}$ be the point at which the geodesic leaves $\Or_i$.

The edges $A_i = E_{i-1} \bs E_i$ are dropped and the edges $B_i = F_i \bs F_{i-1}$ are added as the geodesic moves from $\Or_{i-1}$ to $\Or_i$.  Thus the edges $A_i$ and $B_i$ all have length 0 in the intersection $\Or(E_i \cup F_{i-1})$.  

Let $m = \abs{E_i \cup F_{i-1}}$, the dimension of $\Or_{i-1} \cap \Or_i$.  An affine hull of a set $S$ in $\R^n$ is the intersection of all affine sets containing $S$.  Consider the subset $S = H_a \cup H_b$ of $\Or_{i-1} \cup \Or_i$, where $H_a$ is the affine hull of $\mathbf{a} \cup (\Or_{i-1} \cap \Or_i)$ intersected with $\Or_{i-1}$ and $H_b$ is the affine hull of $\mathbf{b} \cup (\Or_{i-1} \cap \Or_i)$ intersected with $\Or_i$.  This subset can be isometrically mapped into two orthants in $\R^{m+1}$ as follows.  For each tree $T \in H_a$, let the first $m$ coordinates be given by the projection of $T$ onto $\Or_{i-1} \cap \Or_i$.  Let the $(m+1)$-st coordinate be the length of the projection of $T$ orthogonal to $\Or_{i-1} \cap \Or_i$.  More specifically, let the edges in $E_i \cup F_{i-1}$ be $e_1, e_2, ..., e_m$.  Then we map $T$ to the point $(\abs{e_1}_{T}, \abs{e_2}_{T}, ...., \abs{e_m}_{T}, s)$ in $\R^{m+1}$, where $s = \sqrt{ \sum_{e \in A_i } \abs{e}_{T}^2 }$.  Similarly, for each tree $T \in H_b$, let the first $m$ coordinates be given by the projection of $T$ onto $\Or_{i-1} \cap \Or_i$.  Let the $(m + 1)$-st coordinate be the negative of the length of the projection of $T$ orthogonal to $\Or_{i-1} \cap \Or_i$.  In other words, we map $T$ to the point $(\abs{e_1}_{T}, \abs{e_2}_{T}, ...., \abs{e_m}_{T}, -s)$ in $\R^{m+1}$, where $s = \sqrt{ \sum_{e \in B_i } \abs{e}_{T}^2 }$.

We have mapped $S$ into Euclidean space, and hence the shortest path between the image of $\mathbf{a}$ and the image of $\mathbf{b}$ is the straight line between them.  Along this line, each edge $e_1, ..., e_m$ changes at the same rate with respect to the geodesic arc length.  Since we can make this argument for each pair of consecutive orthants, we have proven this proposition.
\end{proof}
%
\begin{cor}
Let $T$ be a tree on the path space geodesic between $T_1$ and $T_2$ through the path space $S = \cup_{i = 0}^k \Or(E_i \cup F_i)$.  Suppose $T \in \Or_i$.  Then if $1 \leq j \leq i$, we have $\frac{ \abs{f_1}_{T} }{ \abs{f_1}_{T_2} } = \frac{ \abs{f_2}_{T} }{ \abs{f_2}_{T_2} }$ for any $f_1, f_2 \in B_j$, and if $i < j \leq k$, we have $\frac{ \abs{e_1}_{T} }{ \abs{e_1}_{T_1} } = \frac{ \abs{e_2}_{T} }{ \abs{e_2}_{T_1} }$ for any $e_1, e_2 \in A_j$.
\label{cor:local_geo_3}
\end{cor}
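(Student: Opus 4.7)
The plan is to use Proposition~\ref{prop:local_geo_2} to view each relevant edge's length along the geodesic as a single affine function of arc length. Parametrize the path space geodesic by arc length $s$ and let $s_{T_1}, s_T, s_{T_2}$ denote the parameter values at $T_1$, $T$, and $T_2$; for each $1 \leq j \leq k$ let $s_j$ be the value at which the geodesic crosses the common face $\Or_{j-1} \cap \Or_j = \Or(E_j \cup F_{j-1})$. By Proposition~\ref{prop:local_geo_2}, an edge present with positive length throughout a consecutive run of orthants has length equal to a single affine function of $s$ on the corresponding interval; this is exactly what the matched-rate Euclidean unfolding in that proof establishes, iterated across each intermediate face.

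For the first claim, fix $1 \leq j \leq i$ and pick $f \in F_j \bs F_{j-1}$. By the definition of a path space, $f \in F_\ell$ precisely when $\ell \geq j$, so $f$ has length $0$ on $\Or_{j-1} \cap \Or_j$ and positive length throughout $\Or_j, \dots, \Or_k$ (which includes both $\Or_i$ and $\Or_k$). Hence there is a rate $r_f \geq 0$ with
\[
\abs{f}_T = r_f (s_T - s_j) \quad \text{and} \quad \abs{f}_{T_2} = r_f (s_{T_2} - s_j),
\]
so $\abs{f}_T / \abs{f}_{T_2} = (s_T - s_j)/(s_{T_2} - s_j)$, a ratio depending only on $j$, not on $f$. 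Applying this to $f_1$ and $f_2$ gives the first equality. The case $i < j \leq k$ is symmetric: any $e \in E_{j-1} \bs E_j$ has positive length throughout $\Or_0, \dots, \Or_{j-1}$ (which contains $\Or_i$, since $i \leq j-1$) and length $0$ on $\Or_{j-1} \cap \Or_j$, so $\abs{e}_T / \abs{e}_{T_1} = (s_j - s_T)/(s_j - s_{T_1})$, independent of $e$.

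The only step requiring care is the promotion of Proposition~\ref{prop:local_geo_2} from a single orthant to the entire consecutive run of orthants in which a given edge lives; without this one cannot identify the same affine function at both $T$ and the relevant endpoint ($T_1$ or $T_2$). This extension is already implicit in the Euclidean unfolding argument used to prove Proposition~\ref{prop:local_geo_2}, which shows that persistent edges have the same rate on either side of each crossed face. Iterating across every face crossed by the geodesic yields a single affine law on the full run, and no additional geometric argument is needed; the rest is arithmetic.
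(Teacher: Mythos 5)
Your proof is correct and is essentially the paper's own argument: Proposition~\ref{prop:local_geo_2} gives that each edge's length is affine in arc length, each $f\in F_j\setminus F_{j-1}$ is zero exactly at the face $\Or_{j-1}\cap\Or_j$ (crossed at a single parameter value $s_j$) and positive thereafter, so the ratio $\abs{f}_T/\abs{f}_{T_2}=(s_T-s_j)/(s_{T_2}-s_j)$ depends only on $j$, and symmetrically for $e\in E_{j-1}\setminus E_j$. Your remark about "promoting" the proposition across consecutive orthants is not an additional gap but rather a restatement of what Proposition~\ref{prop:local_geo_2} already asserts (the rate is constant along the whole geodesic, not just within one orthant), and the paper reads it the same way.
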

\begin{proof}
Let $f_1, f_2 \in B_j$ be edges in the tree $T \in \Or_i$ from the hypothesis.  Then by Proposition~\ref{prop:local_geo_2}, there exist $c_{f_1}, c_{f_2} > 0$ such that $\abs{f_1}_T = c_{f_1} \cdot d_S(T_1, T)$, $\abs{f_1}_{T_2} = c_{f_1} \cdot d_S(T_1, T_2)$, $\abs{f_2}_T = c_{f_2} \cdot d_S(T_1, T)$, and $\abs{f_2}_{T_2} = c_{f_2} \cdot d_S(T_1, T_2)$.  Then 
$\frac{ \abs{f_1}_{T} }{ \abs{f_1}_{T_2} } = \frac{ c_{f_1} \cdot d_S(T_1, T) }{ c_{f_1} \cdot d_S(T_1, T_2) } = \frac{ d_S(T_1, T) }{ d_S(T_1, T_2) } = \frac{ c_{f_2} \cdot d_S(T_1, T) }{ c_{f_2} \cdot d_S(T_1, T_2) } = \frac{ \abs{f_2}_{T} }{ \abs{f_2}_{T_2} }.$
The argument to show $\frac{ \abs{e_1}_{T} }{ \abs{e_1}_{T_1} } = \frac{ \abs{e_2}_{T} }{ \abs{e_2}_{T_1} }$ for any $e_1, e_2 \in A_j$ is analogous.

\end{proof}
Therefore, there is one degree of freedom for each set of edges dropped, or alternatively for each set of edges added, at the transition between orthants.  Thus, the path space geodesic lies in a space of dimension equal to the number of transitions between orthants.  We will now show that each path space geodesic lives in a space isometric to $V(\R^k)$.  For example, in Figure~\ref{fig:tree_to_Euclidean_space_1}, the path space $Q$ consists of the orthants $\Or(\{e_1, e_2, e_3\})$, $\Or(\{f_1, e_2, e_3\})$, and $\Or(\{f_1, f_2, f_3\})$.  We apply Theorem~\ref{th:homeo_to_Rm} to see that the geodesic through $Q$ is contained in the shaded region of $\R^2$ shown in Figure~\ref{fig:tree_to_Euclidean_space_2}.
\begin{figure}[ht]
\centering
\subfigure[Part of $\T_5$.]{
\includegraphics[scale=0.35]{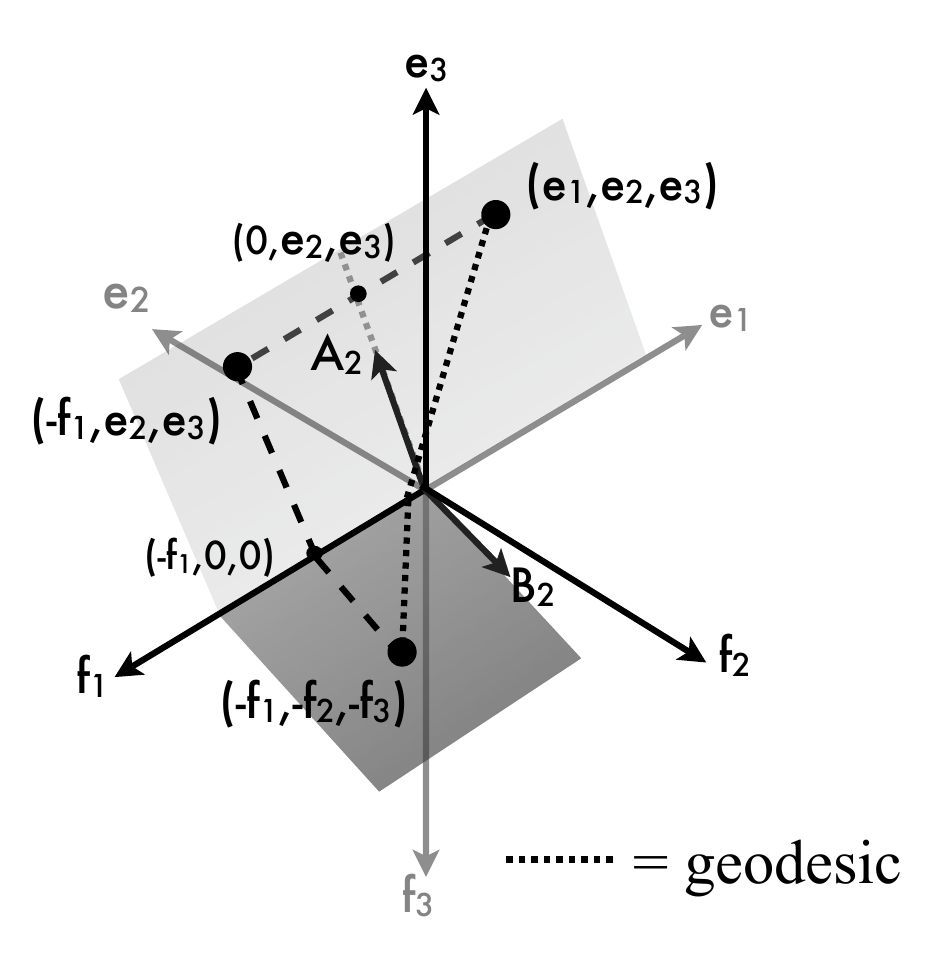}
\label{fig:tree_to_Euclidean_space_1}
}
\subfigure[Isometric mapping to $V(\R^2)$.]{
\includegraphics[scale=0.35]{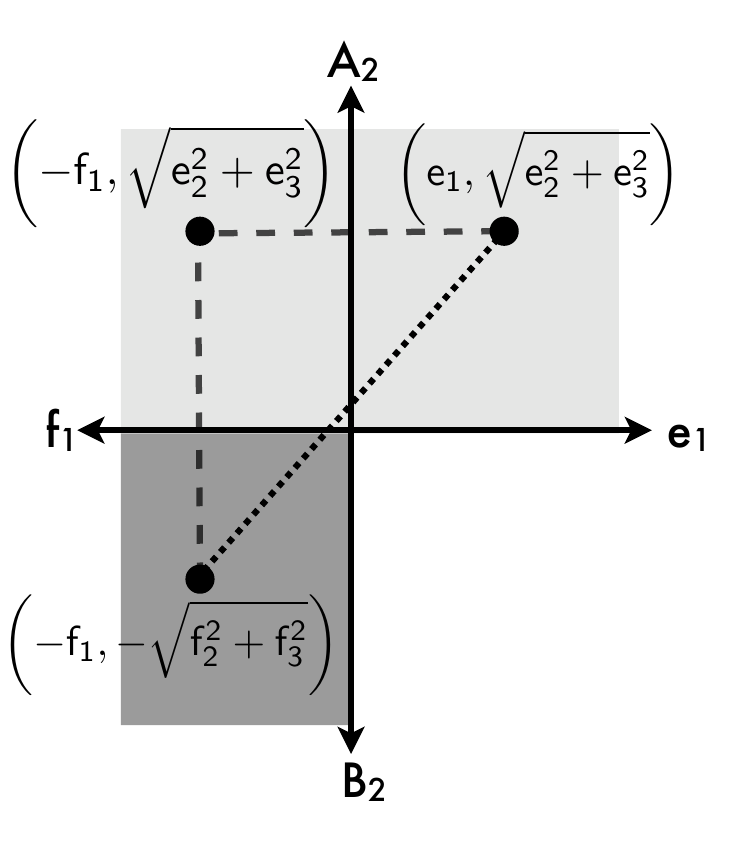}
\label{fig:tree_to_Euclidean_space_2}
}
\caption{An isometric map between a path space and $V(\R^2)$.}
\label{fig:tree_to_Euclidean_space}
\end{figure}

%
\begin{thm}{}
\label{th:homeo_to_Rm}
Let $Q = \cup_{i =0}^k \mathcal{O}(E_i \cup F_i)$ be a path space between $T_1$ and $T_2$, two trees in $\T_n$ with no common splits.  Then the path space geodesic between $T_1$ and $T_2$ through $Q$ is contained in a space isometric to $V(\R^k)$.
\end{thm}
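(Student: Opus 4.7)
The plan is to construct an explicit isometric embedding of a $k$-dimensional subspace $S \subseteq Q$ containing the path space geodesic into $V(\R^k)$. By Corollary~\ref{cor:local_geo_3}, along the path space geodesic within any orthant $\Or_i$, the edges inside each ``batch'' $E_{j-1}\bs E_j$ (for $j>i$) maintain constant ratios to their lengths in $T_1$, and the edges inside each $F_j\bs F_{j-1}$ (for $j\le i$) maintain constant ratios to their lengths in $T_2$. This collapses each multi-edge batch to a single one-dimensional parameter and gives $k$ degrees of freedom per orthant.

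First I would define $S\cap\Or_i$ to consist of exactly those trees where each batch is proportionally scaled as above, and parameterize it by $(s_1,\dots,s_k)\in\R^k_{\ge 0}$, with $s_j$ the common scaling factor of the $j$-th batch. Then I would define $\phi\colon S\to V(\R^k)$ by sending such a $T\in S\cap\Or_i$ to $(x_1,\dots,x_k)$ where
\[
x_j = \begin{cases} s_j\norm{E_{j-1}\bs E_j}_{T_1} & \text{if } j>i, \\ -s_j\norm{F_j\bs F_{j-1}}_{T_2} & \text{if } j\le i. \end{cases}
\]
By construction $\phi(T)\in V_i$. On the overlap $\Or_i\cap\Or_{i+1}=\Or(E_{i+1}\cup F_i)$ the $(i{+}1)$-th batch has zero length from either side, forcing $s_{i+1}=0$ and hence $x_{i+1}=0$, so the two parameterizations agree and $\phi$ is well-defined and continuous across orthant boundaries.

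The main step is the isometry check. For $T,T'\in S\cap\Or_i$ with parameters $s_j,s'_j$, I would expand the squared Euclidean distance in $\Or_i$ batch-by-batch and substitute $|e|_T=s_j|e|_{T_1}$ or $|f|_T=s_j|f|_{T_2}$ to factor out $\norm{E_{j-1}\bs E_j}_{T_1}^2$ or $\norm{F_j\bs F_{j-1}}_{T_2}^2$; this yields $\sum_{j=1}^k(x_j-x'_j)^2$, which is exactly the squared Euclidean distance in $V_i$. Because adjacent images $V_i,V_{i+1}$ glue along the common face $\{x_{i+1}=0\}$, arc lengths of paths add compatibly across orthants, giving a global isometry $\phi\colon S\to V(\R^k)$. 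Propositions~\ref{prop:local_geo_1} and~\ref{prop:local_geo_2} together with Corollary~\ref{cor:local_geo_3} guarantee that the entire path space geodesic lies inside $S$, completing the proof.

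The main obstacle is the bookkeeping around the sign convention: one must verify that the transition from ``positive scalar times $T_1$-length'' to ``negative scalar times $T_2$-length'' at the boundary $\Or_i\cap\Or_{i+1}$ is continuous in the sense that the single scaling variable $s_{i+1}$ smoothly vanishes from both sides, and that the $k+1$ orthant embeddings glue along their common coordinate hyperplanes to form exactly $V(\R^k)$ rather than a more degenerate configuration. A related subtlety is explicitly invoking Corollary~\ref{cor:local_geo_3} so that the geodesic is confined to $S$, not just $Q$.
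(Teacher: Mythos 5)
Your proposal is correct and follows essentially the same route as the paper's proof: both use Corollary~\ref{cor:local_geo_3} to restrict attention to the $k$-dimensional subspace where each batch $E_{j-1}\bs E_j$ or $F_j\bs F_{j-1}$ scales proportionally, then define the coordinate map sending the $j$-th scaling factor to $\pm s_j\norm{\cdot}$ (your $x_j$ matches the paper's $-c_j\norm{\mathbf{B_j}}$, $d_j\norm{\mathbf{A_j}}$ exactly), and verify isometry orthant-by-orthant with agreement on the shared face $\{x_{i+1}=0\}$. The only cosmetic difference is that the paper phrases the isometry check as the map being linear with identity matrix in a batch-adapted orthonormal basis, while you expand the squared distance batch-by-batch; these are the same computation.
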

\begin{proof}
By Corollary~\ref{cor:local_geo_3}, any tree $T' \in Q$ on the path space geodesic satisfies the following two conditions for each $1 \leq j \leq k$:
\begin{enumerate}
\item if $T' \in \Or_i$ and $j \leq i$, then there exists a $c_j = c_j(T') \geq 0$, depending on $T'$, such that $\frac{\abs{f}_{T'}}{ \abs{f}_{T_2}} = c_j$ for all $f \in B_j$,\\
\item if $T' \in \Or_i$ and $j > i$, then there exists a $d_j = d_j(T') \geq 0$, depending on $T'$, such that $\frac{\abs{e}_{T'}}{ \abs{e}_{T_1}} = d_j$ for all $e \in A_j$.
\end{enumerate}
Let $Q' \subset \T_n$ be the set of trees satisfying this property.  For $0 \leq i \leq n$, define $h_i:Q' \cap \Or_i \to V_i$ by 
\begin{align*}
h_i \bigl(T'\bigr) &= h_i \bigl(T \left (c_1 \cdot B_1 \cup ... \cup c_i \cdot B_i \cup d_{i+1}\cdot A_{i+1} \cup ... \cup d_k \cdot A_k  \right ) \bigr) \\
&= \bigl(-c_1 ||B_1||, ..., -c_i ||B_i||, d_{i+1} ||A_{i+1} ||, ..., d_k ||A_k|| \bigr).
\end{align*}
We claim that $h_i$ is a bijection from $Q' \cap \Or_i$ to the orthant $V_i$ in $V(\R^k)$.  All trees in the interior of orthant $\Or_i$ have exactly the edges $\{B_1, ..., B_i, A_{i+1}, ..., A_k\}$.  Let $N = |B_1| + |B_2| + ... + |B_i| + |A_{i+1}| + .... + |A_k|$, the number of edges in trees in $\Or_i$.  Then $\Or_i$ is an $N$-dimensional orthant, and 
 we can assign each edge to a coordinate axis so that the edges in $B_1$ are assigned to coordinates 1 to $|B_1|$, the edges in $B_2$ are assigned to coordinates $|B_1| + 1$ to $|B_1| + |B_2|$, the edges in $A_{i+1}$ are assigned to the coordinates $|B_1| + |B_2| + ... +  |B_i| + 1$ to $|B_1| + |B_2| + ... +  |B_i| + |A_{i+1}|$, etc.  Let $e_j$ be the edge assigned to the $j$-th coordinate.  By abuse of notation, for all $1 \leq j \leq i$, let $\bold{B_j}$ be the $N$-dimensional vector with a 0 in every coordinate except those corresponding to the edges $B_j$, where we put the length of that edge in $T_2$.  Similarly, for all $i < j \leq k$, let $\bold{A_j}$ be the $N$-dimensional vector with a 0 in every coordinate except those corresponding to the edges in $A_j$, where we put the length of that edges in $T_1$.  For example, $\bold{B_1}$ is the $N$-dimensional vector $(|f_1|_{T_2}, |f_2|_{T_2}, ..., |f_{|B_1|}|_{T_2}, 0, ..., 0)$.  

Then $Q' \cap \Or_i$ is generated by the vectors $\left \{ \frac{\bold{B_1}}{\norm{ \bold{B_1} } }, \frac{\bold{B_2}}{ \norm{ \bold{B_2} } }, ..., \frac{\bold{B_i}}{ \norm{ \bold{B_i} } }, \frac{ \bold{A_{i+1}} }{ \norm{ \bold{A_{i+1}} } }, .., \frac{ \bold{A_k}}{ \norm{ \bold{A_k} }} \right \}$.  Since these generating vectors are pairwise orthogonal, they are independent, and hence $Q' \cap \Or_i$ is a $k$-dimensional orthant contained in $\Or_i$.  Furthermore, for all $1 \leq j \leq i$, $\frac{ \bold{B_j} }{ \norm{ \bold{B_j} } }$ corresponds to the tree 
$T \left (\frac{1}{ \norm{ \bold{B_j} } } \cdot B_j \right),$
 and for all $i < j \leq k$, $\frac{ \bold{A_j} }{ \norm{ \bold{A_j} } }$ corresponds to the tree 
 $T \left (\frac{1}{ \norm{ \bold{A_j} } } \cdot A_j \right).$
 For all $1 \leq j \leq k$, let $\mathbf{u_j}$ be the $k$-dimensional unit vector with a 1 in the $j$-th coordinate.  Then for $1 \leq j \leq i$, 
  \begin{align*}
 h_i \left (\frac{ \bold{B_j} }{ \norm{ \bold{B_j} } } \right ) = h_i \left (T \left ( \frac{1}{ \norm{ \bold{B_j} } } \cdot B_j \right ) \right) = - \frac{1}{ \norm{ \bold{B_j} } } \cdot \norm{ \bold{B_j} } \mathbf{u_j} = -\mathbf{u_j}.
 \end{align*}
 Similarly,  for all $i < j \leq k$, 
 \begin{align*}
 h_i \left (\frac{ \bold{A_j} }{ \norm{ \bold{A_j} } } \right) = h_i \left (T \left( \frac{1}{ \norm{ \bold{A_j} } } \cdot A_j \right ) \right) = \frac{1}{ \norm{ \bold{A_j} } } \cdot \norm{ \bold{A_j} } \mathbf{u_j} = \mathbf{u_j}.
 \end{align*}
The basis of $V_i$ is $\{-\mathbf{u_1}, ..., -\mathbf{u_i}, \mathbf{u_{i+1}}, ..., \mathbf{u_k}\}$, so $h_i$ maps each basis element of $Q' \cap Q_i$ to a unique basis element of $V_i$.  Thus, $h_i$ is a linear transformation, whose corresponding matrix is the identity matrix, and hence a bijection between $Q' \cap Q_i$ and $V_i$ for all $i$.  Furthermore, since the determinant of the matrix of $h_i$ is 1, $h_i$ is also an isometry.  So $Q'$ is piecewise linearly isometric to $V(\R^k)$.

For all $0 \leq i \leq n$, the inverse of $h_i$ is $g_i:V_i \to Q'$ defined by 
$g_i(-x_1, ... -x_i, x_{i+1}, ..., x_k) = T',$
where $x_j \geq 0$ for all $1 \leq j \leq k$ and $T'$ is the tree with edges $E_i \cup F_i$ with lengths $\frac{|x_j|}{ \norm{ \bold{B_j} } }\cdot |e|_{T_2}$ if $e \in B_j$ for $1 \leq j \leq i$ and $\frac{ \abs{x_j} }{ \norm{ \bold{A_j} }}\cdot |e|_{T_1}$ if $e \in A_j$ for $i < j \leq k$.  

Notice that if $T' \in Q' \cap \Or_i \cap \Or_{i+1}$, then $h_i(T') = h_{i+1}(T')$, since the lengths of all the edges in $A_{i+1}$ and $B_{i+1}$ are 0.  Therefore, define $h: Q' \to V(\R^k)$ to be $h(T') = h_i(T')$ if $T' \in \Or_i \cap Q'$, which is well-defined.  Define $g:V(\R^k) \to Q'$ by setting
$g(-x_1, ... -x_i, x_{i+1}, ..., x_k) = g_i(-x_1, ... -x_i, x_{i+1}, ..., x_k),$
for all $1 \leq i \leq k$ and for all $x_j \geq 0$ for all $1 \leq j \leq k$.  Then $g$ is also well-defined and the inverse of $h$.  

For any geodesic $q$ in $Q'$, map it into $V(\R^k)$ by applying $h$ to each point on $q$ to get path $p$.  Notice that since both $h_i$ and $g_i$ are distance preserving, $p$ is the same length as $q$.  We claim $p$ is a geodesic in $V(\R^k)$.  To prove this, suppose not.  Let $p'$ be the geodesic in $V(\R^k)$ between the same endpoints as path $p$.  Then $p'$ is strictly shorter than $p$.  Use $g$ to map $p'$ back to $Q'$ to get $q'$.   Again distance is preserved, so $q'$ is strictly shorter than $q$.  But $q$ was a geodesic, and hence the shortest path between those two endpoints in $Q'$, so we have a contradiction.  Therefore, the geodesic between $T_1$ and $T_2$ in $Q$ is isometric to the geodesic between $A = ( \norm{ \bold{A_1} } , ..., \norm{ \bold{A_k} } )$ and $B = (- \norm{ \bold{B_1} } , ..., - \norm{ \bold{B_k} } )$ in $V(\R^k)$.
\end{proof}
Thus, finding the geodesic through a $(k+1)-$orthant path space $\cup_{i =0}^k \mathcal{O}(E_i \cup F_i)$ is equivalent to finding the geodesic through $V(\R^k)$ between the point $A = ( \norm{ \bold{A_1} } , ..., \norm{ \bold{A_k} } )$ and the point $B = (- \norm{ \bold{B_1} } , ..., - \norm{ \bold{B_k} } )$.  Now consider the Euclidean space $\R^k$ in which every orthant that is not in $V(\R^k)$ is replaced by an obstacle. Then finding the shortest path from $A$ to $B$ in this new space with obstacles will give us the path space geodesic in tree space.   

We will now generalize, and somewhat abuse notation, by letting $A$ be any point in the all-positive orthant of $\R^k$ and by letting $B$ be any point in the all-negative orthant of $\R^k$.  Then we can reformulate this general problem as the following touring problem:

\begin{prob}[Touring] \label{prob:touring}
Let $A$ be any point in the positive orthant of $\R^k$ and let $B$ be any point in the negative orthant of $\R^k$.  Let $P_i$ be the boundary between the $i$-th and $(i+1)$-st orthants in $V(\R^k)$, for all $1 \leq i \leq k$.  That is, 
\[P_i = \{ (x_1, ..., x_k) \in \R^k: x_j \leq 0 \text{ if $j < i$; } x_j = 0 \text{ if $j = i$; }  x_j \geq 0 \text{ if $j > i$} \}.\]
Find the shortest path between $A$ and $B$ in $\R^k$ that intersects $P_1, P_2, ..., P_k$ in that order.
\end{prob}

In dimensions 3 and higher, the Euclidean shortest path problem with obstacles is NP-hard in general \cite{CR87}, including when the obstacles are disjoint axis-aligned boxes \cite{MS04}.  The touring problem can be solved in polynomial time as a second order cone problem when the regions are polyhedra \cite{PM05}.   In the special case of the above touring problem, we find a simple linear algorithm.
%
%
\subsection{Touring Problem Solution}
In this section, we give a solution to Problem~\ref{prob:touring}.  Since this is a convex optimization problem, this solution is unique \cite{PM05}, and we will call it the shortest, ordered path.    As in the problem statement, let $A = (a_1, ..., a_k)$, where $a_i \geq 0$ for all $1 \leq i \leq k$, and let $B = (-b_1, ..., -b_k)$, where $b_i \geq 0$ for all $1 \leq i \leq k$.  First, Lemma~\ref{lem:ascending_ratio_seq_means_geo_is_line} establishes when a straight line from $A$ to $B$ passes through the regions in the desired order.  Two further properties of the shortest, ordered path are given in Lemmas~\ref{lem:for_key_lem} and \ref{lem:inductive_step_for_shortest_path_alg}.  Theorem~\ref{th:get_dist} shows how exploiting this last property, in conjunction with using Theorem~\ref{th:homeo_to_Rm} to reduce the dimension of the problem, gives a linear algorithm for finding the shortest, ordered path from $A$ to $B$.
%
%
\begin{lem}{}
\label{lem:ascending_ratio_seq_means_geo_is_line}
The line from $A$ to $B$, $\overline{AB}$, passes through the regions $P_1, P_2, ..., P_k$ in that order and has length $\sqrt{\sum_{i = 1}^k (a_i + b_i)^2}$ if and only if $\frac{a_1}{b_1} \leq \frac{a_2}{b_2} \leq ... \leq \frac{a_k}{b_k}$.
\end{lem}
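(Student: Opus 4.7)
The plan is to parameterize the segment $\overline{AB}$ and read off, coordinate by coordinate, when it crosses each hyperplane $x_i = 0$, then show that ``passing through $P_1, P_2, \dots, P_k$ in that order'' is equivalent to the crossing times being monotone nondecreasing, which in turn is equivalent to the ratio condition $\frac{a_1}{b_1} \leq \cdots \leq \frac{a_k}{b_k}$.

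Concretely, I would write the line as $L(t) = (1-t)A + tB$ for $t \in [0,1]$, so that its $i$-th coordinate is $L_i(t) = (1-t)a_i - tb_i$. Assuming first that $a_i + b_i > 0$ for every $i$, this coordinate vanishes exactly at $t_i = a_i/(a_i+b_i)$, is strictly positive for $t < t_i$, and strictly negative for $t > t_i$. The segment $\overline{AB}$ meets $P_i$ precisely at $t = t_i$, and the sign requirements defining $P_i$ (namely $x_j \leq 0$ for $j < i$ and $x_j \geq 0$ for $j > i$) translate directly into $t_j \leq t_i$ for $j < i$ and $t_j \geq t_i$ for $j > i$. Thus $\overline{AB}$ visits $P_1, P_2, \dots, P_k$ in order if and only if $t_1 \leq t_2 \leq \cdots \leq t_k$. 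A short algebraic manipulation shows
\[
t_i \leq t_{i+1} \iff \tfrac{a_i}{a_i+b_i} \leq \tfrac{a_{i+1}}{a_{i+1}+b_{i+1}} \iff \tfrac{b_i}{a_i} \geq \tfrac{b_{i+1}}{a_{i+1}} \iff \tfrac{a_i}{b_i} \leq \tfrac{a_{i+1}}{b_{i+1}},
\]
which gives the claimed equivalence. The distance statement is then automatic: once the straight segment lies inside $V(\R^k)$, its length is simply the Euclidean distance $\lVert A - B \rVert = \sqrt{\sum_i (a_i + b_i)^2}$.

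The main obstacle is the bookkeeping for degenerate coordinates, where some $a_i$ or $b_i$ equals zero and the ratio $a_i/b_i$ must be read with the conventions $0/0$ and $a/0 = +\infty$. If $a_i = b_i = 0$ then the $i$-th coordinate of $L(t)$ is identically zero and the requirements of both $P_{i-1}$ and $P_{i+1}$ at that coordinate are automatically met, so $t_i$ can be chosen freely, consistently with any neighboring ratios. If $a_i = 0$ but $b_i > 0$ (resp.\ $a_i > 0$ and $b_i = 0$) then $t_i = 0$ (resp.\ $t_i = 1$) and the inequality $t_{i-1} \leq t_i \leq t_{i+1}$ becomes a forced boundary condition that matches $a_i/b_i = 0$ (resp.\ $= +\infty$) in the ratio chain. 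Handling these cases explicitly, but by the same sign analysis, gives both directions of the equivalence; the forward direction uses the ordering of $P_i$'s to deduce the ratio inequalities, while the converse uses the ratio inequalities to build the ordered sequence of crossing times and verify that $L(t_i) \in P_i$ for each $i$.
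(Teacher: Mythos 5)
Your proof is correct and takes essentially the same route as the paper: parametrize $\overline{AB}$, read off the crossing time $t_i = a_i/(a_i+b_i)$ for the hyperplane $x_i=0$, observe that visiting $P_1,\dots,P_k$ in order is equivalent to $t_1\le\cdots\le t_k$, and cross-multiply to obtain the ratio condition. Your additional care with the degenerate cases $a_i=0$ or $b_i=0$ goes beyond the paper's brief proof, which implicitly assumes $a_i+b_i>0$ and leaves such conventions unstated.
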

\begin{proof}
Parametrize the line $\overline{AB}$ with respect to the variable $t$, so that $t=0$ at $A$ and $t=1$ at $B$, to get $(x_1, ..., x_k) = (a_1, ..., a_k) + t(-a_1 - b_1, ..., -a_k - b_k)$.  Let $t_i$ be the value of $t$ at the intersection of $\overline{AB}$ and $P_i$.  Setting $x_i =0$, and solving for $t$ gives $t_i = \frac{a_i}{a_i + b_i}$.  For $\overline{AB}$ to cross $P_1, P_2, ..., P_k$ in that order, we need $t_1 \leq t_2 \leq ... \leq t_k$ or $\frac{a_1}{a_1 + b_1} \leq \frac{a_2}{a_2 + b_2} \leq ... \leq \frac{a_k}{a_k + b_k}$.  Since for any $1 \leq i, j \leq k$, $\frac{a_i}{a_i + b_i} \leq \frac{a_j}{a_j + b_j}$ is equivalent to $\frac{a_i}{b_i} \leq \frac{a_j}{b_j}$ by cross multiplication, we get the desired condition.  By the Euclidean distance formula, the length $\overline{AB}$ is $\sqrt{\sum_{i = 1}^k (a_i + b_i)^2}$.
\end{proof}
\begin{cor}{}
\label{cor:equal_ratios}
Let $A = (a_1, ..., a_k)$ and $B = (-b_1, ..., -b_k)$ be points in $\R^k$ with $a_i, b_i \geq 0$ for all $1 \leq i \leq k$.   Then $\frac{a_i}{b_i} = \frac{a_{i+1}}{b_{i+1}}$ if and only if $\overline{AB}$ intersects $P_i \cap P_{i+1}$.
\end{cor}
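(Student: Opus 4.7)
The plan is to read this off directly from the parametrization that appears in the proof of Lemma~\ref{lem:ascending_ratio_seq_means_geo_is_line}. Parametrize $\overline{AB}$ by $(x_1(t),\ldots,x_k(t)) = (a_1,\ldots,a_k) + t(-a_1-b_1,\ldots,-a_k-b_k)$ for $t \in [0,1]$, and recall that $\overline{AB}$ crosses $P_j$ at the unique parameter value $t_j = \frac{a_j}{a_j + b_j}$, obtained from $x_j(t) = 0$.

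Next I would observe that by the definition of $P_i$ and $P_{i+1}$, their intersection $P_i \cap P_{i+1}$ is precisely the set of points in $\R^k$ for which both $x_i = 0$ and $x_{i+1} = 0$ (with the remaining coordinates satisfying the appropriate sign constraints, which will be automatic along $\overline{AB}$ by the monotonicity of each coordinate in $t$). Hence $\overline{AB}$ meets $P_i \cap P_{i+1}$ if and only if there is a single $t \in [0,1]$ at which both $x_i(t) = 0$ and $x_{i+1}(t) = 0$, which is equivalent to $t_i = t_{i+1}$.

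Finally, the equality $t_i = t_{i+1}$ reads $\frac{a_i}{a_i + b_i} = \frac{a_{i+1}}{a_{i+1} + b_{i+1}}$, and cross multiplying (exactly as in the proof of Lemma~\ref{lem:ascending_ratio_seq_means_geo_is_line}) turns this into $\frac{a_i}{b_i} = \frac{a_{i+1}}{b_{i+1}}$. Combining the two equivalences gives the corollary in both directions.

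I do not expect a real obstacle here; the only minor subtlety is the degenerate case in which some $b_j$ (or $a_j$) is zero, which would make a ratio formally undefined. This is handled uniformly by working with the equivalent cross-multiplied form $a_i(a_{i+1}+b_{i+1}) = a_{i+1}(a_i+b_i)$, i.e.\ $a_i b_{i+1} = a_{i+1} b_i$, so the statement remains meaningful and the argument goes through verbatim.
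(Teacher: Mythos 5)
Your approach is the same one the paper intends (reading off the $t_j = a_j/(a_j+b_j)$ parametrization from Lemma~\ref{lem:ascending_ratio_seq_means_geo_is_line}), and the backward direction is fine: if $\ol{AB}$ meets $P_i \cap P_{i+1}$, then at that point $x_i = x_{i+1} = 0$, so $t_i = t_{i+1}$, and cross-multiplying gives $\tfrac{a_i}{b_i} = \tfrac{a_{i+1}}{b_{i+1}}$.

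The gap is in the forward direction, at the parenthetical ``which will be automatic along $\ol{AB}$ by the monotonicity of each coordinate in $t$.'' Monotonicity of each $x_j(t)$ individually only tells you that $x_j$ changes sign once, at $t_j$; it does not control where $t^* := t_i = t_{i+1}$ sits relative to the other $t_j$. For the point at time $t^*$ to lie in $P_i \cap P_{i+1}$ you need $x_j(t^*) \le 0$ for $j < i$ and $x_j(t^*) \ge 0$ for $j > i+1$, i.e.\ $t_j \le t^*$ for $j < i$ and $t_j \ge t^*$ for $j > i+1$, and that is an ordering condition on the $t_j$'s, not a consequence of monotonicity. Concretely, take $k=3$, $A=(3,1,1)$, $B=(-1,-1,-1)$: here $\tfrac{a_2}{b_2} = \tfrac{a_3}{b_3}$, so $t_2 = t_3 = \tfrac12$, but at that time $x_1 = 1 > 0$, so the point $(1,0,0)$ is not in $P_2 \cap P_3$. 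What rescues the corollary (and what the paper is implicitly invoking by citing the lemma's proof) is the standing hypothesis of Lemma~\ref{lem:ascending_ratio_seq_means_geo_is_line} that $\ol{AB}$ crosses $P_1,\ldots,P_k$ in order, equivalently $t_1 \le t_2 \le \cdots \le t_k$; under that hypothesis the sign constraints at $t^*$ do hold. You should state explicitly that you are working under this hypothesis rather than attributing it to monotonicity.

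Your remark about the degenerate case where some $b_j = 0$ (or $a_j = 0$), handled by passing to the cross-multiplied form $a_i b_{i+1} = a_{i+1} b_i$, is a sensible and correct precaution and is consistent with the paper's convention.
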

\begin{proof}
This follows directly from the proof of Lemma~\ref{lem:ascending_ratio_seq_means_geo_is_line}.
\end{proof}
In general, we will not have $\frac{a_1}{b_1} \leq \frac{a_2}{b_2} \leq ... \leq \frac{a_k}{b_k}$, and hence the shortest path is not a straight line.  Since the shortest, ordered path corresponds to a path space geodesic in the shortest Euclidean path with obstacles problem, Proposition~\ref{prop:local_geo_1}, Proposition~\ref{prop:local_geo_2}, and Corollary~\ref{cor:local_geo_3} also hold here.  Therefore, the shortest, ordered path intersects each region $P_i$ at a unique point $p_i$, where the path may bend.  The path is a straight line from $p_i$ to $p_{i+1}$ for $1 \leq i <k$.  We can straighten a bend in the path by isometrically mapping the problem to a lower dimensional space using the following Corollary~\ref{cor:homeo_to_Rm} to Theorem~\ref{th:homeo_to_Rm}.  We repeat this process for each successive bend until Lemma~\ref{lem:ascending_ratio_seq_means_geo_is_line} applies.


%
\begin{cor}
\label{cor:homeo_to_Rm}
Consider the shortest path from $A= (a_1, a_2, ..., a_k)$ to $B = (-b_1, -b_2,$ $ ..., -b_k)$ in $\R^k$ passing through $P_1$, ..., $P_k$ in that order.  Let $\{M_j\}_{j= 1}^m$ be any ordered partition of $\{1, 2, ..., k\}$ such that $i,l \in M_j$ implies $p_i = p_l$.  Then this path is contained in a region of $\R^k$ isometric to $V(\R^m)$.
\end{cor}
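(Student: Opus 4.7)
The plan is to construct, in direct analogy with the proof of Theorem~\ref{th:homeo_to_Rm}, a region $Q' \subset V(\R^k)$ piecewise-linearly isometric to $V(\R^m)$ that contains the path. Denote by $U_j$ the $j$-th orthant of $V(\R^m)$, i.e.\ $U_j = \{(y_1,\ldots,y_m) \in \R^m : y_l \leq 0 \text{ for } l \leq j,\ y_l \geq 0 \text{ for } l > j\}$, and keep the notation $V_j$ for the orthants of $V(\R^k)$ as in the text. Set $q_0 := A$, $q_{m+1} := B$, and $q_j := p_i$ for any $i \in M_j$ (well-defined by the partition hypothesis), and write $I_j := M_1 \cup \cdots \cup M_j$. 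By the discussion preceding the corollary, the path consists of the straight segments from $q_{j-1}$ to $q_j$ for $j = 1,\ldots,m+1$, the $j$-th of which lies in $V_{|I_{j-1}|} \subset V(\R^k)$.

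The heart of the argument is a direction-consistency claim. Define, for each $j \in \{1,\ldots,m\}$, the non-negative unit vectors in $\R^k$ supported on $M_j$
\[
\mathbf{v}_j^+ := \frac{A|_{M_j}}{\norm{A|_{M_j}}} \quad \text{and} \quad \mathbf{v}_j^- := -\frac{B|_{M_j}}{\norm{B|_{M_j}}}
\]
(with an arbitrary convention if the relevant restriction vanishes). I would show by induction that, for every $i \in \{0,1,\ldots,m+1\}$ and every $j \in \{1,\ldots,m\}$, the restriction $q_i|_{M_j}$ is a non-negative scalar multiple of $\mathbf{v}_j^+$ when $i < j$ and a non-positive scalar multiple of $\mathbf{v}_j^-$ when $i \geq j$. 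The inductive step at each bend $q_i$ uses the first-order optimality of the locally shortest path: setting to zero the derivative of $\norm{q_{i-1} - q_i} + \norm{q_i - q_{i+1}}$ with respect to a perturbation of $q_i$ whose $M_i$-coordinates stay zero yields that the unit tangent vectors of the segments incoming to and outgoing from $q_i$ agree on every coordinate outside $M_i$. In particular, for $j \neq i$, the $M_j$-component of the unit tangent is preserved across $q_i$, so the $M_j$-direction of motion can change only at the bend $q_j$ itself; this gives the claim.

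With direction-consistency established, for each $j \in \{0,1,\ldots,m\}$ define the linear map $h_j : U_j \to \R^k$ by
\[
h_j(y_1,\ldots,y_m) := \sum_{l \leq j} y_l\, \mathbf{v}_l^- + \sum_{l > j} y_l\, \mathbf{v}_l^+.
\]
Because the $\mathbf{v}_l^\pm$ have pairwise disjoint supports (since the $M_l$ partition $\{1,\ldots,k\}$), they form an orthonormal system, so each $h_j$ is a linear isometry onto an $m$-dimensional sub-orthant $W_j \subseteq V_{|I_j|}$. Moreover, $h_{j-1}$ and $h_j$ agree on the shared face $\{y_j = 0\} = U_{j-1} \cap U_j$, since there the $y_j$-term vanishes and the remaining $\mathbf{v}_l^\pm$ they use coincide; hence the $h_j$'s glue into a well-defined piecewise linear isometry $h : V(\R^m) \to V(\R^k)$, and $Q' := \bigcup_j W_j = h(V(\R^m))$ is isometric to $V(\R^m)$. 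Finally, the direction-consistency claim exhibits each point $r$ on the $l$-th segment as $h_{l-1}(\mathbf{y})$ for an appropriate $\mathbf{y} \in U_{l-1}$, so the entire path lies in $Q'$.

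The main obstacle is establishing the direction-consistency claim; everything else is a routine adaptation of the isometric construction in the proof of Theorem~\ref{th:homeo_to_Rm}.
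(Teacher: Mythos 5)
Your approach is genuinely different from the paper's, and both are valid in spirit. The paper's proof is a one-paragraph reduction back to tree space: it observes that if $i$ and $i+1$ lie in the same block then $p_i = p_{i+1}$, so the pre-image of the path loses the splits $E_{i-1}\backslash E_i$ and $E_i \backslash E_{i+1}$ (and gains $F_i \backslash F_{i-1}$ and $F_{i+1} \backslash F_i$) simultaneously. Hence the pre-image lies in the coarser path space $\Or_0 \cup \bigl( \cup_{j=1}^m \Or\bigl((\cap_{i\in M_j} E_i)\cup(\cup_{i\in M_j} F_i)\bigr)\bigr)$ with only $m+1$ orthants, and Theorem~\ref{th:homeo_to_Rm} is then applied to that path space. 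Your proof, by contrast, stays entirely in $V(\R^k)$ and builds the piecewise-linear isometry explicitly out of the unit vectors $\mathbf{v}_j^\pm$. Your ``direction-consistency'' claim is essentially the Euclidean translation, under the isometry of Theorem~\ref{th:homeo_to_Rm}, of Corollary~\ref{cor:local_geo_3}, which the paper obtains for free by going back to tree space; you are re-deriving it from first-order optimality. The upside of your route is that it is self-contained within the touring-problem setting and does not rely on the tree-space scaffolding; the downside is that you take on the burden of the optimality argument. Your construction of the maps $h_j$ is correct: the $\mathbf{v}_l^\pm$ have disjoint supports and unit norm, so each $h_j$ is a linear isometry onto an $m$-dimensional sub-orthant of $V_{|I_j|}$, and the $h_j$'s agree on the shared faces.

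The one genuine gap is in the first-order-optimality step, and it concerns degenerate bends. You perturb $q_i$ subject only to the constraint that its $M_i$-coordinates stay zero, and conclude the gradient of $\|q_{i-1}-q_i\|+\|q_i-q_{i+1}\|$ is supported on $M_i$. But the admissible perturbations must also preserve the ordering constraint, which for coordinates in $M_l$ with $l\neq i$ means keeping them on the correct (closed) side of zero. If $q_i$ has a zero coordinate outside $M_i$, the perturbation is one-sided there and you get a KKT inequality rather than an equality, so the unit tangents need not agree on that coordinate; likewise, if $q_i = q_{i+1}$ (a length-zero segment), the outgoing unit tangent is undefined. This can happen precisely when the partition $\{M_j\}$ is not the maximally coarse one compatible with the $p_i$'s. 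The cleanest fix is to first observe that it suffices to prove the statement for the coarsest such partition (a region isometric to $V(\R^{m'})$ with $m' \leq m$ trivially embeds isometrically into a region isometric to $V(\R^m)$ by padding with zero coordinates), and then note that for the coarsest partition, since the path meets each $P_l$ at exactly one point and distinct blocks have distinct bend points, $q_i$ has strictly nonzero coordinates outside $M_i$ and $q_i \neq q_{i\pm 1}$, so the first-order argument applies without a boundary issue. With that addition, your proof goes through.
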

\begin{proof}
Suppose $i, i+1$ are in the same block in $\{M_j\}_{j= 1}^m$.  Then $p_i = p_{i+1}$, and travelling along the pre-image of the path in tree space, the tree loses splits $A_i$ and $A_{i+1}$ simultaneously, and gains splits $B_i$ and $B_{i+1}$ simultaneously.  Hence, this path is in the path space $S = \Or_0 \cup \left (\cup_{j=1}^m \Or\left ( (\cap_{i \in M_j} E_i  ) \cup (\cup_{i \in M_j} F_i ) \right ) \right)$.  Apply Theorem~\ref{th:homeo_to_Rm} to $S$ to see that its path space geodesic is contained in a region isometric to $V(\R^m)$, as desired.
\end{proof}
Notice that under the mapping to $V(\R^m)$ described in the above proof, $A$ is mapped to $\widetilde{A} = \left ( \sqrt{ \sum_{i \in M_1} a_i^2}, \sqrt{ \sum_{i \in M_2} a_i^2}, ..., \sqrt{ \sum_{i \in M_m} a_i^2} \right )$ and $B$ is mapped to $\widetilde{B} = \left ( -\sqrt{ \sum_{i \in M_1} b_i^2}, -\sqrt{ \sum_{i \in M_2} b_i^2}, ..., -\sqrt{ \sum_{i \in M_m} b_i^2} \right )$.

To apply Corollary~\ref{cor:homeo_to_Rm}, we need to know when $p_i = p_{i+1}$.  A condition for this is given in Lemma~\ref{lem:inductive_step_for_shortest_path_alg}.  The following Lemma~\ref{lem:for_key_lem} is used in proving Lemma~\ref{lem:inductive_step_for_shortest_path_alg}, but it also shows that the shortest path only bends at the intersection of two or more $P_i$'s (by setting $J=i$).
%
%
\begin{lem}{}
\label{lem:for_key_lem}
Let $q$ be the shortest path from $A$ to $B$ that passes through $P_1, P_2, ..., P_k$ in that order.  Let $p_j$ be the intersection of $q$ and $P_j$ for each $1 \leq j \leq k$.  If $\frac{a_J}{b_J} \leq \frac{a_{J+1}}{b_{J+1}} \leq ... \leq \frac{a_i}{b_i}$, for some $1 \leq J \leq i < k$, $q$ is a straight line until it bends at $p_J = p_{J+1} = ... = p_i$, and $p_{J-1} \neq p_J$ if $J >1$, then $p_i = p_{i+1}$.
\end{lem}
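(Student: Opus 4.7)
The plan is a proof by contradiction: assume $p_i \neq p_{i+1}$ and exhibit a strictly shorter ordered path arbitrarily close to $q$, contradicting the local shortness of $q$. The first step is a dimension reduction via Corollary~\ref{cor:homeo_to_Rm}. The ordered partition $M_1=\{1\},\ldots,M_{J-1}=\{J-1\},M_J=\{J,J+1,\ldots,i\},M_{J+1}=\{i+1\},\ldots$ is admissible precisely because the hypothesis gives $p_J=p_{J+1}=\cdots=p_i$. Applying the corollary carries the relevant portion of $q$ isometrically into $V(\R^m)$ with $m=k-(i-J)$; the multi-plane bend at $p$ becomes a bend $\widetilde p$ on a single hyperplane $\widetilde P_J$, and the collapsed $J$-th coordinates of $\widetilde A,\widetilde B$ are $\sqrt{\sum_{j=J}^i a_j^2}$ and $-\sqrt{\sum_{j=J}^i b_j^2}$. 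Under this map the ratio hypothesis $a_J/b_J\leq\cdots\leq a_i/b_i$ controls the ``collapsed'' ratio $r:=\sqrt{\sum_{j=J}^i a_j^2}/\sqrt{\sum_{j=J}^i b_j^2}$, which is sandwiched in $[a_J/b_J,\,a_i/b_i]$ with equality only when all the block ratios are equal.

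Next I would write down the first-order optimality condition at $\widetilde p$. Since $q$ is straight from $A$ to $p$ by hypothesis (the straightness is preserved by the isometry) and the outgoing segment is straight until the next bend $p^\ast$ (locally shortest paths bend only at the $\widetilde P_j$'s), varying $\widetilde p$ on the flat $\widetilde P_J$ while holding $\widetilde A$ and $p^\ast$ fixed gives a Snell-type identity: the incoming and outgoing unit directions agree on every coordinate tangent to $\widetilde P_J$. Projecting that identity onto the $(J+1)$-th coordinate (which corresponds to the original index $i+1$) yields the explicit expression
\[
\widetilde p^{\,J+1}\;=\;\frac{\beta\, a_{i+1}+\alpha\,(p^\ast)^{J+1}}{\alpha+\beta},\qquad \alpha=|\widetilde A-\widetilde p|,\ \beta=|p^\ast-\widetilde p|.
\]

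The final, decisive step is to force a contradiction. Admissibility in $V_J$ requires $\widetilde p^{\,J+1}\geq 0$, while the assumption $p_i\neq p_{i+1}$ says $\widetilde p^{\,J+1}>0$. The idea is to exhibit a ``merged'' alternative where the bend is on the larger flat $\widetilde P_J\cap\widetilde P_{J+1}$: admissibility of this merged bend comes from the ratio hypothesis (by Lemma~\ref{lem:ascending_ratio_seq_means_geo_is_line} applied in the reduced problem, the straight segment from $\widetilde A$ through the larger flat passes through $\widetilde P_J$ and $\widetilde P_{J+1}$ in the correct order when $r\leq a_{i+1}/b_{i+1}$, and otherwise the merged bend is strictly forced onto the smaller admissibility face), and a direct triangle-inequality computation shows the merged path is strictly shorter whenever $\widetilde p^{\,J+1}>0$. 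Lifting this shorter path back through the isometry of Corollary~\ref{cor:homeo_to_Rm} contradicts the local shortness of $q$ within any $\epsilon$-neighborhood, and so $\widetilde p^{\,J+1}=0$, i.e., $p\in P_{i+1}$, giving $p_i=p_{i+1}$. The main obstacle is that the ``next bend'' $p^\ast$ is not known a priori; handling it cleanly --- and verifying that the merged alternative is admissible under the ratio hypothesis regardless of where $p^\ast$ lies --- is the technical heart of the argument, and I would expect it to rely on isolating the pair of adjacent segments $\widetilde A\to\widetilde p\to p^\ast$ and replacing them by a single segment justified by Lemma~\ref{lem:ascending_ratio_seq_means_geo_is_line}.
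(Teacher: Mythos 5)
The proposal and the paper's proof share the opening move (contradiction: assume $p_i\neq p_{i+1}$), but diverge completely after that. The paper constructs a concrete shorter ordered path: pick a point $Y$ on the straight segment $\overline{p_i p_{i+1}}$ an $\varepsilon$ past $p_i$, show directly that the straight line $\overline{AY}$ still crosses $P_1,\dots,P_i$ in order (using the ascending-ratio hypothesis, the constant-ratio fact that $y_j/y_{j+1}=b_j/b_{j+1}$ for $J\leq j<i$, and a convexity argument for the $P_{J-1}$--$P_J$ interchange), and then invoke the triangle inequality. Your proposal instead wants to first reduce dimension via Corollary~\ref{cor:homeo_to_Rm} and then argue from Snell-type first-order optimality at $\widetilde p$.

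There are two genuine gaps. First, the decisive step is missing, and you say so yourself: the ``merged alternative'' is never explicitly constructed, its admissibility is waved at rather than verified, and the claimed triangle-inequality computation is not carried out. The paper side-steps this entirely by taking $Y$ strictly between $p_i$ and $p_{i+1}$, so there is no ``unknown next bend $p^\ast$'' to worry about; your worry about $p^\ast$ is an artifact of the route you chose. Second, the Snell-type identity you write down (the formula for $\widetilde p^{\,J+1}$ as a weighted average of $a_{i+1}$ and $(p^\ast)^{J+1}$) is correct when the bend is in the relative interior of $\widetilde P_J$, but it does not by itself produce a contradiction: $\widetilde p^{\,J+1}>0$ is perfectly consistent with that weighted-average identity. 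The contradiction would have to come from the $J$-th coordinate of the unit incoming/outgoing vectors, not the $(J+1)$-st; and even then, the KKT conditions only give the tangential equality $u_\ell=v_\ell$ at coordinates where no inequality constraint of $P_J$ is tight, so one must first rule out tight constraints at $\widetilde p_J$ in coordinates other than $J-1$ and $J+1$. None of that is done. You also never address the point (handled in the paper by a short convexity argument) that the candidate shorter path must intersect $P_{J-1}$ \emph{before} $P_J$; that check does not disappear under the dimension reduction.

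Your dimension-reduction step is itself sound, and it is a genuinely different starting point from the paper's (the paper applies Corollary~\ref{cor:homeo_to_Rm} only in the \emph{next} lemma, not here). If you want to pursue this route, the cleaner path is: after reducing, prove the ``$J=i$'' case directly --- show via first-order optimality with the KKT sign conditions that $\widetilde p_J$ cannot lie in the relative interior of $\widetilde P_J$, then use the uniqueness of intersection with each $\widetilde P_\ell$ together with $\widetilde p_{J-1}\neq \widetilde p_J$ to force $\widetilde p_J\in \widetilde P_J\cap\widetilde P_{J+1}$. As written, though, the proposal stops at a sketch and the argument does not close.
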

\begin{proof}
This proof is by contradiction, so assume that $p_i \neq p_{i+1}$.  Since $q$ is a shortest, ordered path, $q$ is a straight line from $p_i$ to $p_{i+1}$.  Let $Y = (-y_1, ..., -y_i, y_{i+1}, ..., y_k)$, where $y_j \geq 0$ for all $1 \leq j \leq k$, be a point on the line $\overline{p_i p_{i+1}}$, $\varepsilon > 0$ past $p_i$.  Note that $AYp_J$ forms a non-trivial triangle, since $q$ bends at $p_J$.  We will now show that $\overline{AY}$ intersects $P_1$, $P_2$, ..., $P_i$ in that order.   

Parametrize the paths $q$ and $\overline{AY}$ with respect to time $t$, so that $t = 0$ at $A$ and $t = 1$ at $Y$.  The $j$-th coordinate, for $1 \leq j \leq J-1$, decreases linearly from $a_j$ to $-y_j$ in both $q$ and $\overline{AY}$, and thus become 0 at the same time in both paths.  This implies that since $q$ crosses $P_1, ..., P_{J-1}$ in that order, $\overline{AY}$ also crosses $P_1, ..., P_{J-1}$ in that order.

Let $t_j$ be the time at which $\overline{AY}$ intersects $P_j$, for $1 \leq j \leq i$.  Then $0 = a_j + t_j (-y_j - a_j)$ or $t_j = \frac{a_j}{y_j + a_j}$.  In $q$, each coordinate between $J$ and $i$ becomes 0 at the same time.  These coordinates then decrease linearly, so the ratio between any two consecutive coordinates remains constant as time increases.  This implies $\frac{y_j}{y_{j+1}} = \frac{b_j}{b_{j+1}}$ for each $J \leq j \leq i$.  Since $\frac{a_J}{b_J} \leq \frac{a_{J+1}}{b_{J+1}} \leq ... \leq \frac{a_i}{b_i}$ by the hypothesis, then $\frac{a_J}{y_J} \leq \frac{a_{J+1}}{y_{J+1}} \leq ... \leq \frac{a_i}{y_i}$.  This implies $\frac{a_J}{a_J + y_J} \leq \frac{a_{J+1}}{a_{J+1} + y_{J+1}} \leq ... \leq \frac{a_i}{a_i + y_i}$, or $t_J \leq t_{J+1} \leq ... \leq t_i$.  Thus $\overline{AY}$ intersects $P_J, P_{J+1}, ..., P_i$ in that order.

It remains to show that $\overline{AY}$ intersects $P_{J-1}$ before $P_J$ if $J >1$, which we do by contradiction.  So assume that $t_J < t_{J-1}$.  Let $r_{J-1}$ and $r_J$ be the points of intersection of $\overline{AY}$ with $P_{J-1}$ and $P_J$, respectively.  By the hypotheses and assumption, $r_J$ and $p_J$ are contained in $P_J \bs P_{J-1}$.  Since $P_{J-1}$ and $P_J$ are convex, $\overline{r_{J-1} p_{J-1}}$ and $\overline{r_J p_J}$ are contained in $P_{J-1}$ and $P_J$, respectively.  Now $\overline{r_{J-1} p_{J-1}}$ intersects $\overline{r_J p_J}$ inside the triangle $AYp_J$. This implies that $\overline{r_J p_J}$ passes from $P_J \bs P_{J-1}$ into $P_{J-1} \cap P_J$, on the boundary of $P_J$, and back into $P_J \bs P_{J-1}$.  But this contradicts the convexity of $P_J$.  Thus $t_{J-1} \leq t_J$, and $\overline{AY}$ passes through $P_1$, $P_2$, ..., $P_i$ in that order.

By the triangle inequality, $\overline{AY}$ is shorter than the section of $q$ from $A$ to $Y$.  This contradicts $q$ being the shortest, ordered path, and thus $p_i = p_{i+1}$.
\end{proof}
\begin{lem}{}
\label{lem:inductive_step_for_shortest_path_alg}
For the shortest path $q$ from $A$ to $B$ that passes through $P_1, P_2, ..., P_k$ in that order, if $\frac{a_1}{b_1} \leq \frac{a_2}{b_2} \leq ... \leq \frac{a_i}{b_i} > \frac{a_{i+1}}{b_{i+1}}$, then this path intersects $P_i \cap P_{i+1}$.
\end{lem}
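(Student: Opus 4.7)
The plan is to argue by contradiction: assume $p_i \ne p_{i+1}$ and construct a strictly shorter ordered path from $A$ to $B$, contradicting local-shortness of $q$. The construction will be a direct generalization of the shortcut argument used in the proof of Lemma~\ref{lem:for_key_lem}.

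First I would isolate the ``last corner'' of $q$ at or before $p_i$. Let $J$ be the largest index with $1 \le J \le i$ such that $p_{J-1} \ne p_J$, with the convention $p_0 := A$. Then $p_J = p_{J+1} = \cdots = p_i$, the segment of $q$ ending at $p_J$ is a non-degenerate straight line, and because $p_i \ne p_{i+1}$ the path has a genuine corner at $p_J$. The hypothesized chain $\frac{a_J}{b_J} \le \cdots \le \frac{a_i}{b_i}$ is exactly the ratio hypothesis of Lemma~\ref{lem:for_key_lem}. In the special case where $q$ is a single straight segment from $A$ all the way to $p_J$, Lemma~\ref{lem:for_key_lem} applies directly and yields $p_i = p_{i+1}$, a contradiction; the degenerate sub-case $J = i$ (vacuous ratio chain) still goes through by the same shortcut-plus-triangle-inequality argument used in that proof.

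In the general case where $q$ has at least one corner strictly before $p_J$, I would let $X = p_{J''}$ be the last such corner, with $X = A$ and $J'' := 0$ if there is no earlier corner, so that the segment of $q$ from $X$ to $p_i$ is straight. Then I would pick $Y$ on $\overline{p_i p_{i+1}}$ at distance $\varepsilon > 0$ past $p_i$ and consider the straight line $\overline{XY}$. By strict triangle inequality at the corner $p_J$ along the subpath $X \to p_J \to Y$ of $q$, $\overline{XY}$ is strictly shorter than the corresponding portion of $q$; replacing that portion of $q$ with $\overline{XY}$ then yields a strictly shorter ordered path \emph{provided} $\overline{XY}$ still crosses $P_{J''+1}, \ldots, P_i$ in the correct order. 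To verify this order condition, for $J'' < j < J$ I would apply Lemma~\ref{lem:ascending_ratio_seq_means_geo_is_line} to the straight segment of $q$ from $X$ to $p_i$ to extract monotonicity of the ratios $|X^{(j)}|/|p_i^{(j)}|$; for $J \le j \le i$, since the $j$-th coordinate of $p_i$ is zero and $Y$'s $j$-th coordinate is proportional to $p_{i+1}^{(j)}$ by linear interpolation along the straight segment $\overline{p_i p_{i+1}}$, I would combine the hypothesis on $\frac{a_j}{b_j}$ with the fact (a Euclidean touring analog of Proposition~\ref{prop:local_geo_2}, enforced by local-shortness of $q$) that coordinates which simultaneously become nonzero at $p_i$ evolve proportionally to their final values at $B$.

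The step I expect to be the main obstacle is precisely this last ratio-propagation: justifying that the ratios among the coordinates $J, J+1, \ldots, i$ of $p_{i+1}$ coincide with the corresponding ratios of $B$, so that the hypothesis on $b_j$ translates into the needed monotonicity at $Y$. This is an analog of Proposition~\ref{prop:local_geo_2} that must be asserted globally along $q$ beyond $p_{i+1}$, possibly through further corners, and it is the technical heart of the argument; without it one cannot conclude that the shortcut $\overline{XY}$ is an ordered path and the contradiction collapses.
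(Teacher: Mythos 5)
The proposal rests on the claim that ``because $p_i \ne p_{i+1}$ the path has a genuine corner at $p_J$.'' This is the step that fails. In a locally shortest, ordered path, bends occur only at junction points $p_j = p_{j+1}$, and the hypothesis you are working under is precisely that $p_i \ne p_{i+1}$. Taking $J$ to be the start of the block $p_J = \cdots = p_i$ does not guarantee a bend at $p_J$; the path $q$ may be a single straight segment from $A$ (or from your $X$) passing through $p_J = \cdots = p_i$ and continuing to $p_{i+1}$ and beyond. When that happens, $X$, $p_J$, and $Y$ are collinear, your shortcut $\overline{XY}$ coincides with a portion of $q$, the triangle inequality is an equality, and the contradiction you need does not materialize.

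This straight-through scenario is not a degenerate sub-case to be brushed aside; it is the main case. The paper disposes of the branch ``$q$ bends at some $p_j = p_{j+1}$ with $j < i$'' in one line by repeated application of Lemma~\ref{lem:for_key_lem} --- essentially your plan --- and then devotes the bulk of the proof to the situation where $q$ is a straight line from $A$ to $p_{i+1}$. There the argument has to be algebraic rather than a shortcut: parametrize $q$ by a variable $t$, compute the time $t_{i+1}$ at which $q$ reaches $P_{i+1}$ once from the $i$-th coordinate (which changes linearly) and once from the $(i+1)$-st coordinate, equate the two expressions, rearrange, and extract $\frac{a_i}{b_i} < \frac{a_{i+1}}{b_{i+1}}$, contradicting the hypothesis. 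If additionally $q$ bends at $p_{i+1} = p_{i+2} = \cdots$, the paper first collapses that block via Corollary~\ref{cor:homeo_to_Rm}, applies the contrapositive of Lemma~\ref{lem:for_key_lem} to obtain an auxiliary ratio inequality, and then runs a similar computation. The ``ratio-propagation'' difficulty you flag is real and is exactly what Corollary~\ref{cor:homeo_to_Rm} is designed to handle (coordinates reaching zero together must evolve proportionally on either side of that point, so the block can be merged into one coordinate), but it is secondary: the primary gap is that your approach presupposes a corner to shortcut around, and the hard case is precisely the one in which there is none.
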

\begin{proof}
Parametrize $q$ with respect to the variable $t$, so that the path starts at $A$ when $t = 0$, ends at $B$ when $t  = 1$, and passes through $P_j$ at point $p_j = (p_{j,1}, p_{j,2}, ..., p_{j,k})$ when $t = t_j$, for all $1 \leq j \leq k$.  

If $q$ bends before $p_{i+1}$, then let $p_j$ be the first place that it bends.  By repeated applications of Lemma~\ref{lem:for_key_lem}, $q$ also passes through $P_i \cap P_{i+1}$ and we are done.
So assume that $q$ is a straight line from $A$ to $p_{i+1}$.  Thus, the $i$-th coordinate changes linearly from $a_i$ to $-b_i$, and from the parametrization of this, we get $t_{i+1} = \frac{a_i - p_{i+1,i}}{a_i + b_i}$.\\  
\\
Case 1: $p_{i+1,i+2} \neq 0 $ (That is, the shortest ordered path $q$ does not bend at $p_{i+1}$.)

In this case, $p_{i+1, i+1} = 0 = a_{i+1} + t_{i+1}(-b_{i+1} - a_{i+1}),$ which implies $t_{i+1} = \frac{a_{i+1}}{a_{i+1} + b_{i+1}}$.  Equate this value of $t_{i+1}$ with the one found above, and rearrange to get $p_{i+1,i} = a_i - \frac{a_{i+1}(a_i + b_i)}{a_{i+1} + b_{i+1}}$.  The definition of $P_{i+1}$ and the assumption $p_i \neq p_{i+1}$ implies that $p_{i+1,i} < 0$.  Hence, $a_i < \frac{a_{i+1}(a_i + b_i)}{a_{i+1} + b_{i+1}}$, which can be rearranged to $\frac{a_i}{b_i} < \frac{a_{i+1}}{b_{i+1}}$, a contradiction.\\
\\
Case 2: $p_{i+1,i+2} = 0$ (That is, the shortest ordered path $q$ bends at $p_{i+1}$, and $p_{i+1} = p_{i+2}$.)

Let $J \geq 2$ be the largest integer such that $p_{i+J} = p_{i+1}$, but $p_{i+J+1} \neq p_{i+1}$.   Apply Corollary~\ref{cor:homeo_to_Rm} using the partition $\{1\},\{2\}, ..., \{i\},\{i+1\}, \{i+2, ..., i+J\}, \{i+J+1\}, ..., \{k\}$ to reduce the space by $J-2$ dimensions.  $A$ and $B$ are mapped to $\widetilde{A} = (\widetilde{a}_1, ..., \widetilde{a}_{k-(J-2)})$ and $\widetilde{B} = (-\widetilde{b}_1, ...,-\widetilde{b}_{k-(J-2)})$, respectively, in the lower dimension space, where:
\begin{align*}
\widetilde{a}_j =
\begin{cases}
a_j & \text{if $j \leq i+1$} \\
\sqrt{\sum_{l = 2}^{J} a_{i+l}^2} & \text{if $j = i+2$} \\
a_{j+J - 2} & \text{if $j > i+2$}
\end{cases}
\quad \text{ and } \quad 
\widetilde{b}_j =
\begin{cases}
b_j & \text{if $j \leq i+1$} \\
\sqrt{\sum_{l = 2}^{J} b_{i+l}^2} & \text{if $j = i+2$} \\
b_{j+J - 2} & \text{if $j > i+2$}
\end{cases}
\end{align*}
Let $\widetilde{k} = k - (J-2)$.  Let $\widetilde{p}_j$ be the image of $p_j$ in $\R^{\widetilde{k}}$ under the above mapping if $j \leq i+2$ and the image of $p_{j+J-2}$ if $j > i+2$.  Let $\widetilde{P}_j =  \{ (x_1, ..., x_{\widetilde{k}}) \in \R^{\widetilde{k}}: x_l \leq 0 \text{ if $l < j$; } x_l = 0 \text{ if $l = j$; }  x_l \geq 0 \text{ if $l > j$} \}$.  So $\widetilde{P}_j$ is the boundary between the $j$-th and $(j+1)$-st orthants in the lower dimension space $\R^{\widetilde{k}}$.  Let $\widetilde{q}$ be the image of $q$.

Then $\widetilde{q}$ is a straight line from $\widetilde{A}$ to $\widetilde{p}_{i+1}$, and $\widetilde{p}_{i+1} = \widetilde{p}_{i+2} \neq \widetilde{p}_{i+3}$, so $\widetilde{q}$ bends in $\widetilde{P}_{i+1} \cap \widetilde{P}_{i+2}$.  Since $\widetilde{q}$ does not intersect $\widetilde{P}_{i+2} \cap \widetilde{P}_{i+3}$, by the contrapositive of Lemma~\ref{lem:for_key_lem}, $\frac{\widetilde{a}_{i+1}}{\widetilde{b}_{i+1}} > \frac{\widetilde{a}_{i+2}}{\widetilde{b}_{i+2}}$.  In $\R^k$, this translates into the condition that $\frac{a_{i+1}}{b_{i+1}} > \frac{\sqrt{\sum_{l = 2}^{J} a_{i+l}^2 }}{\sqrt{\sum_{l = 2}^{J} b_{i+l}^2}}$.   Cross-multiply, square each side, add $a_{i+1}^2 b_{i+1}^2$, and rearrange to get $\frac{a_{i+1}}{b_{i+1}} > \frac{\sqrt{\sum_{l = 1}^{J} a_{i+l}^2 }}{\sqrt{\sum_{l = 1}^{J} b_{i+l}^2}}$.

The remaining analysis is in $\R^k$.  If the shortest, ordered path is a straight line through $p_{i+1}$, then we make the same argument as in Case 1.  Otherwise, since the path does not bend at $p_i$, the $i$-th coordinate changes linearly from $a_i$ to $-b_i$.  We use this parametrization to find $t_{i+2} = t_{i+1} = \frac{a_i - p_{i+1,i}}{a_i + b_i}$.

Furthermore, the $(i+1)$-st to $(i+J)$-th coordinates decrease at the same rate from $A$ to $p_{i+1}$ and at the same, but possibly different than the first, rate from $p_{i+1}$ to $B$.  Therefore, we can apply Corollary~\ref{cor:homeo_to_Rm} to the partition $\{1\}, \{2\}, ..., \{i\}, \{i+1, i+2, ..., i+J\}, \{i+J+1\}, ..., \{k\} $ to isometrically map the shortest, ordered path into $\R^{m-(J-1)}$.  Let $\widetilde{a} = \sqrt{\sum_{l = 1}^{J} a_{i+l}^2}$, and let $\widetilde{b} = \sqrt{\sum_{l=1}^J(-b_{i+l})^2}$.  Then in $\R^{m - (J-1)}$, the $(i+1)$-st coordinate of the shortest ordered path changes at a constant rate from $\widetilde{a}$ to $- \widetilde{b}$.  This implies $0 = \widetilde{a} + t_{i+1}(-\widetilde{b} - \widetilde{a})$, or $t_{i+1} = \frac{\widetilde{a}}{\widetilde{a} + \widetilde{b}}$.  Equate the two expressions for $t_{i+1}$ to get $p_{i+1,i} = a_i - \frac{(a_i + b_i)\widetilde{a}}{\widetilde{a} + \widetilde{b}}$.  By definition of $P_{i+1}$, $p_{i+1,i} < 0$.  This implies $\frac{a_i}{b_i} < \frac{\widetilde{a}}{\widetilde{b}} = \frac{\sqrt{\sum_{l = 1}^{J} a_{i+l}^2}}{\sqrt{\sum_{l=1}^J(-b_{i+l})^2}}$.  But we showed that $\frac{\sqrt{\sum_{l = 1}^{J} a_{i+l}^2}}{\sqrt{\sum_{l=1}^J(-b_{i+l})^2}} < \frac{a_{i+1}}{b_{i+1}}$, so $\frac{a_i}{b_i} < \frac{a_{i+1}}{b_{i+1}}$, which is also a contradiction.
\end{proof}
By repeatedly applying this lemma, we find the lowest dimensional space containing the shortest, ordered path.  In this space, the ratios derived from the coordinates of the images of $A$ and $B$ form a non-descending sequence.  The following theorem gives the shortest path through $V(\R^k)$ from a point in the positive orthant to a point in the negative orthant, or equivalently, the shortest tour that passes through $P_1, ..., P_k$ in $\R^k$. 
\begin{thm}{} \label{th:get_dist}
Let $A = (a_1, a_2, ..., a_k)$ and $B = (-b_1, -b_2, ..., -b_k)$ with $a_i, b_i \geq 0$ for all $1 \leq i \leq k$ be points in $\R^k$.  Alternate between applying Lemma~\ref{lem:inductive_step_for_shortest_path_alg} and Corollary~\ref{cor:homeo_to_Rm} until there is a non-descending sequence of ratios $\frac{\widetilde{a}_1}{\widetilde{b}_1} \leq \frac{\widetilde{a}_2}{\widetilde{b}_2} \leq ... \leq \frac{\widetilde{a}_m}{\widetilde{b}_m}$, where $\widetilde{a}_i$ and $\widetilde{b}_i$ are the coordinates in the lower dimensional space.  There is a unique shortest path between $\widetilde{A} = (\widetilde{a}_1, ...., \widetilde{a}_m)$ and $\widetilde{B} = (-\widetilde{b}_1, ..., -\widetilde{b}_m)$ in $V(\R^m)$, with distance $\sqrt{\sum_{i=1}^m(\widetilde{a}_i + \widetilde{b}_i)^2}$.  This is the length of the shortest path between $A$ and $B$ in $V(\R^k)$.
\end{thm}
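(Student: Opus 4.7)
The plan is to prove the theorem by induction on the dimension $k$, using Lemmas~\ref{lem:for_key_lem}, \ref{lem:inductive_step_for_shortest_path_alg}, and \ref{lem:ascending_ratio_seq_means_geo_is_line} together with Corollary~\ref{cor:homeo_to_Rm} to reduce any given instance to one whose ratio sequence is already non-descending. The base case is when $\frac{a_1}{b_1} \leq \cdots \leq \frac{a_k}{b_k}$ holds from the start: Lemma~\ref{lem:ascending_ratio_seq_means_geo_is_line} then gives that $\overline{AB}$ lies in $V(\R^k)$, crosses $P_1,\ldots,P_k$ in order, and has length $\sqrt{\sum_{i=1}^k (a_i + b_i)^2}$. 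Since $\overline{AB}$ is the unique Euclidean geodesic between its endpoints, no shorter ordered path can exist, which simultaneously yields existence, uniqueness, and the formula.

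For the inductive step, suppose the ratio sequence is not non-descending and let $i$ be the smallest index with $\frac{a_i}{b_i} > \frac{a_{i+1}}{b_{i+1}}$. Then the prefix $\frac{a_1}{b_1} \leq \cdots \leq \frac{a_i}{b_i} > \frac{a_{i+1}}{b_{i+1}}$ satisfies the hypothesis of Lemma~\ref{lem:inductive_step_for_shortest_path_alg}, so every locally shortest ordered path, and in particular the globally shortest one, must intersect $P_i \cap P_{i+1}$, forcing $p_i = p_{i+1}$. Applying Corollary~\ref{cor:homeo_to_Rm} with the partition $\{1\},\ldots,\{i-1\},\{i,i+1\},\{i+2\},\ldots,\{k\}$ embeds the collection of locally shortest ordered paths isometrically into $V(\R^{k-1})$, sending $A$ and $B$ to points whose $i$-th coordinates become $\sqrt{a_i^2+a_{i+1}^2}$ and $-\sqrt{b_i^2+b_{i+1}^2}$, respectively, with the other coordinates preserved up to reindexing. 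By the induction hypothesis the shortest ordered path length in $V(\R^{k-1})$ is given by iterating the same rule on the reduced coordinates, and since the reduction map is an isometry that carries ordered paths to ordered paths, this value equals the shortest ordered path length in $V(\R^k)$.

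The main technical obstacle is controlling successive merges: a single merge at position $i$ may drop the new ratio below $\frac{a_{i-1}}{b_{i-1}}$, creating a fresh violation to the left. The mediant-type inequality
\[
\min\!\Bigl\{\tfrac{a}{b},\tfrac{c}{d}\Bigr\} \;\leq\; \frac{\sqrt{a^2+c^2}}{\sqrt{b^2+d^2}} \;\leq\; \max\!\Bigl\{\tfrac{a}{b},\tfrac{c}{d}\Bigr\}
\]
shows that the merged ratio lies between the two merged ratios, so the induction simply reapplies to the reduced instance, exactly as in the pool-adjacent-violators procedure from isotonic regression. Since each merge strictly reduces the dimension, at most $k-1$ merges occur before the sequence is non-descending and the base case applies, yielding the final formula $\sqrt{\sum_{i=1}^m (\widetilde a_i + \widetilde b_i)^2}$. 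Composing the isometries along the induction chain transfers this distance and the uniqueness of the corresponding straight line back to the original $V(\R^k)$, completing the proof.
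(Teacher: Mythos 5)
Your proposal follows essentially the same route as the paper's proof: for the first descent $\frac{a_i}{b_i} > \frac{a_{i+1}}{b_{i+1}}$, invoke Lemma~\ref{lem:inductive_step_for_shortest_path_alg} to force $p_i = p_{i+1}$ in every locally shortest ordered path, apply Corollary~\ref{cor:homeo_to_Rm} with the partition that merges $\{i,i+1\}$ to drop one dimension isometrically, iterate until the ratio sequence is non-descending, and then finish with Lemma~\ref{lem:ascending_ratio_seq_means_geo_is_line} to conclude the shortest ordered path is the straight line $\overline{\widetilde{A}\widetilde{B}}$ with length $\sqrt{\sum_{i=1}^m(\widetilde a_i + \widetilde b_i)^2}$, transferring length and uniqueness back through the composed isometries. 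Your recasting of the ``repeat until non-descending'' loop as an explicit induction on dimension, together with the mediant-type inequality showing the merged ratio lies between the two merged ratios (so left-propagating violations are not a problem), is a welcome clarification but does not change the substance of the argument.
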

\begin{proof}
For the smallest $i$ such that $\frac{a_i}{b_i} > \frac{a_{i+1}}{b_{i+1}}$, Lemma~\ref{lem:inductive_step_for_shortest_path_alg} implies that $p_i = p_{i+1}$ in the shortest, ordered path in $\R^k$.  Thus, we can isometrically map this problem to the space one dimension lower that results from applying Corollary~\ref{cor:homeo_to_Rm} using the partition $\{1\}, \{2\}, ..., \{i-1\},\{i,i+1\}, \{i+2\}, ..., \{m\}$.  We repeat these two steps, iteratively mapping this problem to lower dimensional spaces, until the new ratio sequence is non-descending.  Let $\frac{\widetilde{a}_1}{\widetilde{b}_1} \leq \frac{\widetilde{a}_2}{\widetilde{b}_2} \leq ... \leq \frac{\widetilde{a}_m}{\widetilde{b}_m}$ be this ratio sequence.  By Lemma~\ref{lem:ascending_ratio_seq_means_geo_is_line}, the geodesic between $\widetilde{A}$ and $\widetilde{B}$ is the straight line.  Furthermore, its length is  $\sqrt{\sum_{i=1}^m(\widetilde{a}_i + \widetilde{b}_i)^2}$.  Since we mapped from $V(R^k)$ to $V(R^m)$ by repeated isometries, both the length of the path and the order it passes through $P_1, ..., P_m$, or their images, remain the same.  Thus the pre-image of this path is the shortest path in $V(R^k)$.
\end{proof}
%
%
%
\subsubsection{\linalg{}:  A Linear Algorithm for Computing Path Space Geodesics}
\label{sec:linalg}
Theorem~\ref{th:get_dist} can be translated into a linear algorithm called \linalg{}, for computing the path space geodesic between $T_1$ and $T_2$ through some path space $S = \cup_{i=0}^k \Or_k$.  For all $1 \leq i \leq k$, let $A_i = E_{i-1} \backslash E_i$ and $B_i = F_i \backslash F_{i-1}$, and let $a_i = \norm{A_i}$ and $b_i = \norm{B_i}$.

Let $1 \leq i < k$ be the least integer such that $\frac{a_i}{b_i} > \frac{a_{i+1}}{b_{i+1}}$.  Then by Theorem~\ref{th:get_dist}, to find the path space geodesic through $S$, we should apply Lemma~\ref{lem:inductive_step_for_shortest_path_alg} and Corollary~\ref{cor:homeo_to_Rm} to the ratio sequence $ \frac{a_1}{b_1}, \frac{a_2}{b_2}, ..., \frac{a_k}{b_k} $ to map the problem to $V(\R^{k-1})$, where the ratio sequence becomes $\frac{a_1}{b_1}, ..., \frac{a_{i-1}}{b_{i-1}}, \frac{ \sqrt{a_i^2 + a_{i+1}^2}} { \sqrt{b_i^2 + b_{i+1}^2}}, \frac{a_{i+2}}{b_{i+2}}  ..., \frac{a_k}{b_k} $.  Repeat this process until the ratio sequence is non-descending.

Unfortunately, this process is not deterministic, in that different non-descending ratio sequences can be found for the same geodesic, depending on the starting path space.  This occurs, because by Corollary~\ref{cor:equal_ratios}, two equal ratios can be combined to give a ratio sequence corresponding to a path with the same length.  However, if we modify the algorithm to also combine equal ratios, the output ascending ratio sequence will be unique for a given geodesic.

Define the \emph{carrier of the path space geodesic through $S$} between $T_1$ and $T_2$ to be the path space $Q = \cup_{i=0}^l \Or_{c(i)} \subseteq S$ such that the path space geodesic through $S$ traverses the relative interiors of $\Or_{c(0)}$, $\Or_{c(1)}$, ..., $\Or_{c(l)}$, where the function $c:\{0, 1, ..., l \} \to \{0, ...,k\}$ takes $i$ to $c(i)$ if the $i$-th orthant is $Q$ is the $c(i)$-th orthant in $S$.  If a path space geodesic is the geodesic, we just write \emph{carrier of the geodesic}.  Then the carrier of the path space geodesic is the path space whose corresponding ratio sequence is the unique ascending ratio sequence for the path space geodesic.

We now explicitly describe the algorithm for computing the ascending ratio sequence corresponding to the path space geodesic, \linalg{}, and prove it has linear runtime.
\\
\\
\linalg{} \\
\emph{Input}:  Path space $S$ or its corresponding ratio sequence $R = \frac{a_1}{b_1}, \frac{a_2}{b_2}, ..., \frac{a_k}{b_k} $  \\
\emph{Output}:  The path space geodesic, represented as an ascending ratio sequence, which is understood to be the partition of $R$ where the ratio $\frac{\sqrt{\sum_{j = 0}^J a_{i+j}^2}}{\sqrt{\sum_{j = 0}^J b_{i+j}^2}}$ corresponds to the block $\left \{  \frac{a_i}{b_i}, \frac{a_{i+1}}{b_{i+1}}, ..., \frac{a_{i+J}}{b_{i+J}} \right \}$.\\
\emph{Algorithm}:  
Starting with the ratio pair $\frac{a_1}{b_1}, \frac{a_2}{b_2}$, \linalg{} compares consecutive ratios.  If for the $i$-th pair, we have $\frac{a_i}{b_i} \geq \frac{a_{i+1}}{b_{i+1}}$, then \emph{combine} the two ratios by replacing them by $\frac{ \sqrt{a_i^2 + a_{i+1}^2} }{ \sqrt{ b_i^2 + b_{i+1}^2 } }$ in the ratio sequence.  Compare this new, combined ratio with the previous ratio in the sequence, and combine these two ratios if they are not ascending.  Again the newly combined ratio must be compared with the ratio before it in the sequence, and so on.  Once the last combined ratio is strictly greater then the previous one in the sequence, we again start moving forward through the ratio sequence, comparing consecutive ratios.  The algorithm ends when it reaches the end of the ratio sequence, and the ratios form an ascending ratio sequence.
\begin{thm}\label{th:PATHGEOcomplexity}
\linalg{} has complexity $\Theta(k)$, where $k+1$ is the number of orthants in the path space between $T_1$ and $T_2$.
\end{thm}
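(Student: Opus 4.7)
The plan is to establish both a linear upper bound and the matching linear lower bound. The lower bound is immediate: the algorithm must at least read the $k$ input ratios, so any correct algorithm requires $\Omega(k)$ operations. The content is therefore the $O(k)$ upper bound, which requires a careful amortized analysis because a naive worst-case count of the ``backward'' combination steps gives $O(k^2)$.

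To get the linear upper bound, I would reinterpret \linalg{} in a stack-based fashion. Think of the algorithm as maintaining a stack whose entries are the combined ratios forming the current ascending prefix. Processing the input then amounts to a loop that, for each new ratio $\tfrac{a_{i+1}}{b_{i+1}}$ (a \emph{push}), either leaves it on top of the stack or, while it is smaller than or equal to the current top, repeatedly replaces the top two entries by their combined ratio $\tfrac{\sqrt{a^2+a'^2}}{\sqrt{b^2+b'^2}}$ (a \emph{pop-and-merge}). Each push and each pop-and-merge is $O(1)$ work (one comparison, one square-root combination).

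The key accounting step is to observe that the total number of pop-and-merge operations across the entire execution is at most $k-1$. Indeed, every pop-and-merge decreases the number of ratios currently on the stack by one, and the stack size only ever grows via the $k$ pushes. Since the final stack has at least one element, the number of pops is bounded by $k-1$. Adding the $k$ push operations, the total work is at most $2k-1 = O(k)$. Combined with the $\Omega(k)$ lower bound, this gives complexity $\Theta(k)$.

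The main conceptual obstacle is precisely this amortized viewpoint: locally, a single new ratio can trigger a long chain of backward combinations, so the naive per-step bound is linear rather than constant. The resolution is that each such backward combination permanently removes a ratio from further consideration, so charged against the pushes the amortized cost per input ratio is $O(1)$. Once this potential-function argument is in place, the remaining steps (bounding push and pop costs at $O(1)$, and noting that the linear lower bound is immediate from the input size) are routine.
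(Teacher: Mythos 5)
Your proof is correct and takes essentially the same approach as the paper: both arguments rest on the observation that each backward combination reduces the number of ratios by one, hence there are at most $k-1$ combinations, and both note the trivial $\Omega(k)$ lower bound. Your stack-based amortized framing is a cleaner way to present the same counting argument, but the underlying idea is identical to the paper's.
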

\begin{proof}
We first show the complexity is $O(k)$.  Combining two ratios reduces the number of ratios by 1, so this operation is done at most $k-1 = O(k)$ times.  It remains to count the number of comparisons between ratios.  Each ratio is involved in a comparison when it is first encountered in the sequence.  There are $k-1$ such comparisons.  All other comparisons occur after ratios are combined, so there are at most $k-1$ of these comparisons.  Therefore, \linalg{} has complexity $O(k)$.  Any algorithm must make $k-1$ comparisons to ensure the ratios are in ascending order, so the complexity is $\Omega(k)$, and thus this bound is tight.
\end{proof}
%
%
%
%
%
%
\section{Algorithms}
In this section, we show in Theorem~\ref{th:can_use_dp} how to compute the geodesic distance between two trees $T_1$ and $T_2$ by computing the geodesic between certain smaller, related trees.  This allows us to use the results from Sections 3 and 4, as well as either dynamic programming or divide and conquer techniques, to devise two algorithms for finding the geodesic between two trees with no common splits.  Experiments on random trees show these algorithms are exponential, but practical on trees with up to 40 leaves, as well as larger trees from biological data.

%
%
\subsection{A Relation between Geodesics}
Let $T_1$ and $T_2$ be two trees in $\T_n$ with no common splits.  The following theorem shows that there exists a path space containing the geodesic between $T_1$ and $T_2$ such that a certain subspace of it contains the geodesic between two smaller, related trees, $T_1'$ and $T_2'$.  As $T_1'$ and $T_2'$ have fewer splits than $T_1$ and $T_2$, it is easier to compute this geodesic.  Therefore, we can find the geodesic between $T_1$ and $T_2$ by finding the geodesic between all such possible $T_1'$ and $T_2'$.

\begin{defn}
Let $S = \cup_{i=0}^k \Or_i$ be a path space between $T_1$ and $T_2$.  Define $r(S) = \cup_{i=0}^{k-1} \Or(E_i \bs E_{k-1} \cup F_i)$ to be the \emph{truncation} of $S$.
\end{defn}

Then $r(S)$ is a path space between $T_1' = T(\Sigma_1 \bs E_{k-1})$ and $T_2' = T(F_{k-1})$.  That is, $T_1'$ and $T_2' $ are exactly the trees $T_1$ and $T_2$ with the edges $E_{k-1} = A_k$ and $F_k \bs F_{k-1} = B_k$ contracted, and $r(S)$ is the subspace of $S$ formed by removing all trees having edges in $A_k$ or $B_k$ of non-zero length.  Finally, if the path space $S' = \cup_{i=0}^{k-1} \Or_i'$ is the truncation of a path space between trees $T_1$ and $T_2$, then there is a unique path space $S = \cup_{i = 0}^{k-1}  \left ( \Or_i' + \Or(\Sigma_1 \bs E_{k-1}') \right ) \cup \Or(\Sigma_2)$ between $T_1$ and $T_2$ such that $r(S) = S'$.

\begin{thm}
\label{th:can_use_dp}

Let $T_1$ and $T_2$ be two trees in $\T_n$ with no common splits.  Then there exists a path space $Q = \cup_{i=0}^k \Or_i$ that contains the geodesic between $T_1$ and $T_2$, such that the truncation $Q' = r(Q)$ is the carrier of the geodesic between $T_1' = T(\Sigma_1 \bs E_{k-1})$ and $T_2' = T(F_{k-1})$.
\end{thm}
To prove this theorem, we first prove two lemmas which hold for any path space $S$ between $T_1$ and $T_2$, with truncation $S'$.   
Lemma~\ref{lem:proj_QM} shows that the path space geodesic through $S$ is contained in a path space whose truncation is the carrier of the path space geodesic of $S'$.  Lemma~\ref{lem:can_use_dp} shows that if $S'$ does not contain the geodesic between $T_1'$ and $T_2'$, and hence we can find another path space $P'$ containing a shorter path space geodesic, then the corresponding path space $P$ between $T_1$ and $T_2$ does not contain a path space geodesic longer than the one in $S$.
\begin{lem}
\label{lem:proj_QM}
Let $T_1$ and $T_2$ be two trees in $\T_n$ with no common splits, and let $S = \cup_{i = 0}^k \Or_i$ be a path space between them.
Let $Q '$ be the carrier of the path space geodesic through $S' = r(S)$ between $T_1' = T(\Sigma_1 \bs E_{k-1})$ and $T_2' = T(F_{k-1})$.  Let $Q$ be the path space between $T_1$ and $T_2$ such that $r(Q) = Q'$.  Then $d_Q(T_1, T_2) = d_S(T_1, T_2)$.
\end{lem}
\begin{proof}
Since $Q'$ is the carrier of the path space geodesic through $S'$, both $Q'$ and $S'$ have the same path space geodesic, and hence \text{\linalg} will return the same ascending ratio sequence for either input $Q'$ or $S'$.  Let this ascending ratio sequence be $\frac{a_1'}{b_1'}, \frac{a_2'}{b_2'}, \ldots, \frac{a_l'}{b_l'}$.  The ratio sequences corresponding to the path spaces $Q$ and $S$ are just the ratio sequences for $Q'$ and $S'$, respectively, with the ratio $\frac{ \norm{A_k}}{ \norm{B_k}}$ added to the end of each.  So for both inputs $Q$ and $S$, the ratio sequence when \linalg{} compares $\frac{ \norm{A_k}}{ \norm{B_k}}$ for the first time is $\frac{a_1'}{b_1'}, \frac{a_2'}{b_2'}, \ldots, \frac{a_l'}{b_l'}, \frac{ \norm{A_k}}{ \norm{B_k}}$.  This implies that the ratio sequence output by $\text{\linalg}(Q)$ is the same as that output by $\text{\linalg}(S)$, and hence $d_Q(T_1, T_2) = d_S(T_1, T_2)$.
\end{proof}
\begin{lem}
\label{lem:can_use_dp}
Let $T_1$ and $T_2$ be two trees in $\T_n$ with no common splits, and let $S$ be a path space between them.
If $S' = r(S)$ does not contain the geodesic between $T_1'= T(\Sigma_1 \bs E_{k-1})$ and $T_2' = T(F_{k-1})$, then there exists a path space $P'$ between $T_1'$ and $T_2'$  such that $d_{P'}(T_1',T_2') < d_{S'}(T_1',T_2')$ and $d_{P}(T_1, T_2) \leq d_S(T_1, T_2)$, where $P$ is the path space between $T_1$ and $T_2$ with truncation $P'$.  
\end{lem}
\begin{proof}
Let $S' = \cup_{i= 0}^l \Or_i'$, and let $Q '= \cup_{i=0}^l\Or_{c(i)}'$ be the carrier of the path space geodesic through $S'$.  Let $q$ be the path space geodesic through $Q'$ between $T_1'$ and $T_2'$, and let $q_i = \Or_{c(i-1)}' \cap \Or_{c(i)}' \cap q$ for every $1 \leq i \leq l$.  Since $q$ is not the geodesic from $T_1'$ to $T_2'$, $q$ cannot be locally shortest in $\T_n$.  By Proposition~\ref{prop:local_geo_1}, for all $1 \leq i \leq l-1$, the part of $q$ between $q_i$ and $q_{i+1}$ is a line, and cannot be made shorter in $\T_n$.  Thus we can only find a locally shorter path in $\T_n$ by varying $q$ in the neighbourhood of some $q_j$.  In particular, there exists some $\varepsilon$ such that if $s$ and $t$ are the points on $q$, $\varepsilon$ before and after $q_j$ in the orthants $\Or_{c(j-1)}$ and $\Or_{c(j)}$, respectively, then the geodesic between $s$ and $t$ does not follow $q$.  Replace the part of $q$ between $s$ and $t$ with the true geodesic between $s$ and $t$ to get a shorter path in $\T_n$, with distance $d_s$.  Let $\Or_{c(j-1)}, \Or_1'' = \Or(E_1'' \cup F_1''), ..., \Or_m'' = \Or(E_m'' \cup F_m''), \Or_{c(j)}$ be the sequence of orthants through whose relative interiors the geodesic between $s$ and $t$ passes.  Note that $\Or_1'', ..., \Or_m''$ are not in $S'$.  These orthants must form a path space, and thus $P' = Q' \cup \left ( \cup_{i=0}^m \Or_i'' \right )$ is a path space.  Since the path space geodesic is the shortest path through a path space, $d_{P'}(T_1', T_2') \leq d_s < d_{Q'}(T_1', T_2')$.   By definition of $Q'$, $d_{Q'}(T_1', T_2') = d_{S'}(T_1', T_2')$, and hence $d_{P'}(T_1', T_2') < d_{S'}(T_1', T_2')$, as desired.

To show that $d_{P}(T_1, T_2) \leq d_S(T_1, T_2)$, let $Q$ be the path space between $T_1$ and $T_2$ such that $r(Q) = Q'$.  Then $Q \subset P$, which implies $d_{P}(T_1, T_2) \leq d_{Q}(T_1, T_2)$.  By Lemma~\ref{lem:proj_QM}, $d_{Q}(T_1, T_2) = d_S(T_1, T_2)$, and so $P'$ is the desired path space.
\end{proof}

We use Lemma~\ref{lem:proj_QM} and Lemma~\ref{lem:can_use_dp} to prove Theorem~\ref{th:can_use_dp}.
\begin{proof}[Proof of Theorem~\ref{th:can_use_dp}]
We first show there exists a path space $M$ containing the geodesic between $T_1$ and $T_2$, such that its truncation $M'$ contains the geodesic between $T_1'$ and $T_2'$.  So let $S$ be any path space containing the geodesic between $T_1$ and $T_2$, with truncation $S' = r(S)$.  If $S'$ contains the geodesic between $T_1'$ and $T_2'$, then we are done.  If not, then by Lemma~\ref{lem:can_use_dp}, there exists a path space $P'$ from $T_1'$ to $T_2'$ with $d_{P'}(T_1', T_2') < d_{S'}(T_1', T_2')$ and $d_P(T_1, T_2) \leq d_S(T_1, T_2)$, where $P$ is the path space between $T_1$ and $T_2$ such that $P' = r(P)$.  Since $S$ contains the geodesic from $T_1$ to $T_2$, we have $d_P(T_1, T_2) = d_S(T_1, T_2)$, and hence $P$ also contains the geodesic.  If $P'$ contains the geodesic between $T_1'$ and $T_2'$, then we are done.  Otherwise,  repeat this step by applying Lemma~\ref{lem:can_use_dp} to $P$ and $P'$.  This process produces a path space containing a strictly shorter path space geodesic at each iteration, so since there are only a finite number of path spaces, it eventually finds a path space containing the geodesic from $T_1'$ to $T_2'$.  

Let $Q'$ be the carrier of the path space $M'$ containing the geodesic between $T_1'$ and $T_2'$, and let $Q$ be the path space from $T_1$ to $T_2$ such that $r(Q) = Q'$.  Then by Lemma~\ref{lem:proj_QM}, $Q$ also contains the geodesic between $T_1$ and $T_2$, and we are done. 
\end{proof}
 We will now present two algorithms for computing geodesics.  Both of these algorithms use Theorem~\ref{th:can_use_dp} to avoid computing the path space geodesic for every maximal path space between $T_1$ and $T_2$.  This significantly decreases the runtime.  We call these algorithms \textsc{GeodeMaps}, which stands for GEOdesic DistancE via MAximal Path Spaces.  The first algorithm uses dynamic programming techniques, and is denoted \DPalg, while the second uses a divide and conquer strategy, and is denoted \DCalg.
 
\subsection{\DPalg:  a Dynamic Programming Algorithm} 
Theorem~\ref{th:can_use_dp} implies that we can find the geodesic between trees $T_1$ and $T_2$ by just considering certain geodesics corresponding to the elements covered by $\Sigma_2$ in $K(\Sigma_1, \Sigma_2)$.  More specifically, for any $A \in K(\Sigma_1, \Sigma_2)$ covered by $\Sigma_2$, let $Q_A'$ be the carrier for the geodesic $g_A$ from $T(X_{\Sigma_1}(A))$ to $T(A)$.  Then the geodesic from $T_1$ to $T_2$ is the minimum-length path space geodesic through the path spaces $\{Q_A: A \in K(\Sigma_1, \Sigma_2) \text{ is covered by $\Sigma_2$ and } Q_A' = r(Q_A) \}$. 

An analogous method can be applied to find the geodesic $g_A$.  In general, for any element $A \neq \varnothing$ in $K(\Sigma_1, \Sigma_2)$, the geodesic between trees $T(X_{\Sigma_1}(A))$ and $T(A)$ can be computed from the carriers $Q_B'$ of the geodesics from $T(X_{\Sigma_1}(B))$  to $T(B)$ for each $B$ covered by $A$.  This is done by finding the minimum-length path space geodesic through the path spaces $\{Q_B: B \in K(\Sigma_1, \Sigma_2) \text{ is covered by $A$ and } Q_B' = r(Q_B) \}$.  

This suggests the following algorithm.  Let $G_{K(\Sigma_1, \Sigma_2)}$ be the directed graph with vertices in bijection with the elements of $K(\Sigma_1, \Sigma_2)$, and with an edge between two vertices if and only if there is a cover relation between their corresponding elements in $K(\Sigma_1, \Sigma_2)$.  The edge is directed from the covered element to the covering element.  Then we can compute the geodesic distance by doing a breath-first search on $G_{K(\Sigma_1, \Sigma_2)}$.  As we visit each node $A$ in $G_{K(\Sigma_1, \Sigma_2)}$, we construct the geodesic between $T(X_{\Sigma_1}(A))$ and $T(A)$ using the geodesics between $T(X_{\Sigma_1}(B))$ and $T(B)$ for each $B$ covered by $A$.  This algorithm visits every node in the graph, of which there can be an exponential number as shown in Remark~\ref{rem:exponential_path_poset}, so this algorithm is exponential in the worst case.  However, this is a significant improvement over considering each maximal path space.

We implemented a more memory-efficient version of this algorithm, called \DPalg.  This version uses a depth-first search of $G_{K(\Sigma_1,\Sigma_2)}$.  For each element $A$ in $G_{K(\Sigma_1,\Sigma_2)}$, store the distance of the shortest path space geodesic found so far between $T(X_{\Sigma_1}(A))$ and $T(A)$.  If \DPalg{} revisits an element with a longer path space geodesic, it prunes this branch of the search. 

\DPalg{} stores the carrier of the shortest path space geodesic found so far between $T_1$ and $T_2$.  As a heuristic improvement, at each step in the depth-first search, \DPalg{} chooses the node with the lowest transition ratio of the nodes not yet visited.  For more details and an example of \DPalg{}, see \cite[Section 5.2.1]{thesis}.


 \subsection{\DCalg:  a Divide And Conquer Algorithm}
If $A$ is an element in $K(\Sigma_1, \Sigma_2)$, then the trees in the corresponding orthant share the splits $A$ with $T_2$.  This inspires the following algorithm, which we call \DCalg.  Choose some minimal element of $P(\Sigma_1, \Sigma_2)$, and add the splits in this equivalence class to $T_1$ by first dropping the incompatible splits.  For example, if we choose to add the split set $F_1$, then we must drop $X_{\Sigma_1}(F_1)$.  The trees with this new topology now have splits $F_1$ in common with $T_2$.  Apply Theorem~\ref{th:dist_if_common_edge} to divide the problem into subproblems along these common splits.  For each subproblem, recursively call \DCalg.  Since some subproblems will be encountered many times, store the geodesics for each solved subproblem in a hash table.
 
Each subproblem corresponds to an element in $K(\Sigma_1, \Sigma_2)$, and \DCalg{} is polynomial in the number of subproblems solved.  Hence an upper bound on the complexity of \DCalg{} is the number of elements in $K(\Sigma_1, \Sigma_2)$, which is exponential in general by Remark~\ref{rem:exponential_path_poset}.  See \cite[Section 5.2.2]{thesis} for details of this algorithm, an example, and a family of trees for which \DPalg{} has exponential runtime.

\subsection{Performance of \DPalg{} and \DCalg{}}
We now compare the runtime performance of \DPalg{} and \DCalg{} with \textsc{GeoMeTree} \cite{KHK08}, the only other geodesic distance algorithm published when this paper was written.  For $n = 10, 15, 20, 25, 30, 35, 40, 45$, we generated 200 random rooted trees with $n$ leaves, using a birth-death process.  Specifically, we ran \textsc{evolver}, part of PAML \cite{Yang07} with the parameters estimated for the phylogeny of primates in \cite{YangRannala97}, that is 6.7 for the birth rate ($\lambda$), 2.5 for the death rate ($\mu$), 0.3333 for the sampling rate, and 0.24 for the mutation rate.  For each $n$, we divided the 200 trees into 100 pairs, and computed the geodesic distance between each pair.  The average computation times are given in Figure~\ref{fig:graphs}.  Memory was the limiting factor for all three algorithms, and prevented us from calculating the missing data points.
\begin{figure}[ht]
\vspace{-1cm}
\centering
\includegraphics[scale = 0.4]{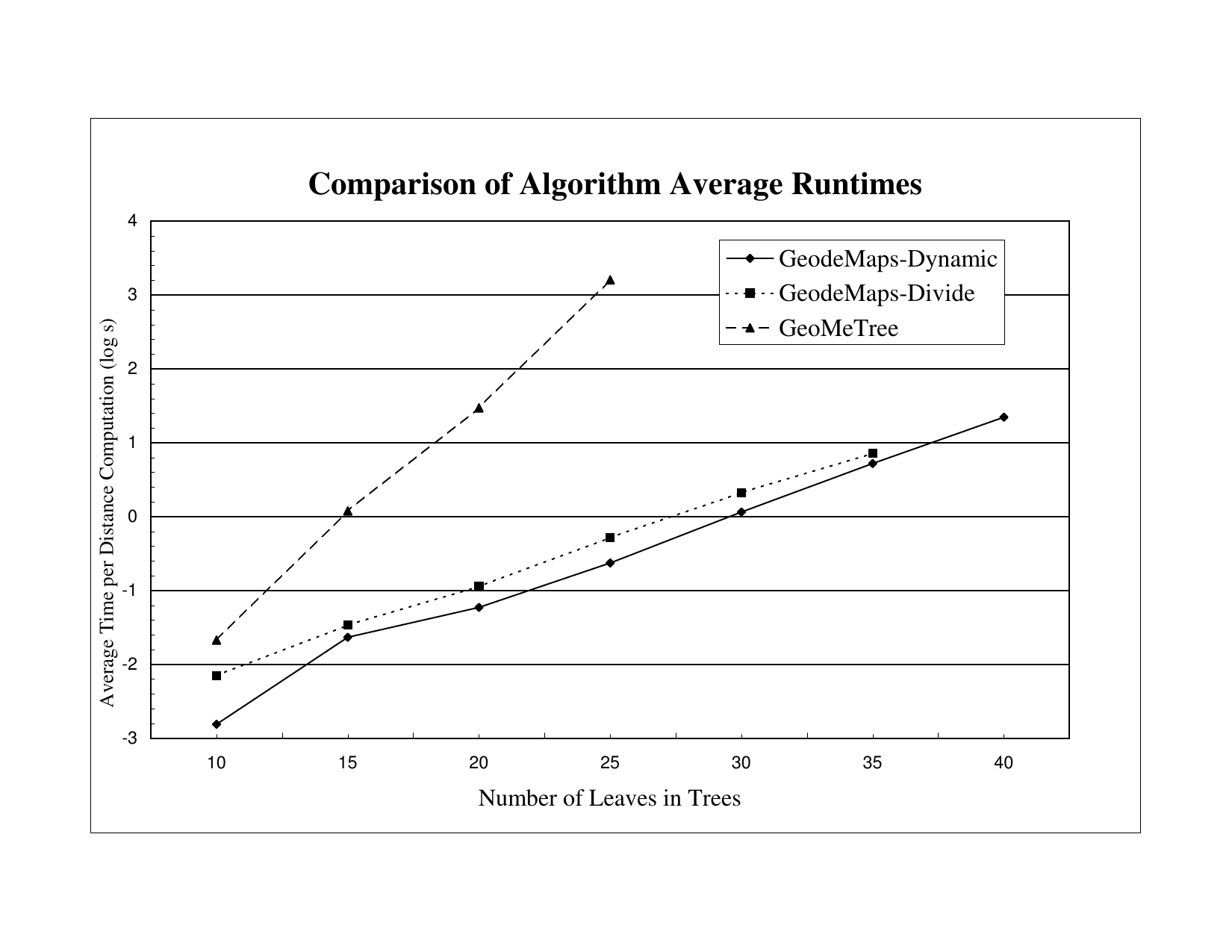}
\vspace{-1cm}
\caption{Average runtimes of the three geodesic distance algorithms.}
\label{fig:graphs}
\end{figure}
Both \DPalg{} and \DCalg{} exhibit exponential runtime, but they are significantly faster the \textsc{GeoMeTree}.  Note that as the trees used were random, they have very few common splits.  Biologically meaningful trees often have many common splits, resulting in much faster runtimes.  For example, for a data set of 31 43-leaved trees representing possible ancestral histories of bacteria and archaea \cite{L07}, we computed the geodesic distance between each pair of trees.  Using \DPalg{} the average computation time was 0.531 s, while using \DCalg{} the average time was 0.23 s.  This contrasts to an average computation time of 22 s by \DPalg{} for two random trees with 40 leaves.
All computations were done on a Dell PowerEdge Quadcore with 4.0 GB memory, and 2.66 GHz x 4 processing speed.  The implementation of these algorithms, GeodeMaps 0.2, is available for download from www.math.berkeley.edu/\~{}megan/geodemaps.html.

%
%
\section{Conclusion}
We have used the combinatorics and geometry of the tree space $\T_n$ to develop two algorithms to compute the geodesic distance between two trees in this space.  In doing so, we developed a poset representation for the possible orthant sequences containing the geodesic, and gave a linear time algorithm for computing the shortest path in the subspace $V(\R^n)$ of $\R^n$, which will help characterize when the general problem of finding the shortest path through $\R^n$ with obstacles is NP-hard.  We also showed that geodesics can be computed by solving smaller subproblems.


\subsection*{Acknowledgements}
We thank Louis Billera for numerous helpful discussions and suggestions about this work; Karen Vogtmann for sharing her notes and thoughts on the problem; Seth Sullivant for suggestions that greatly improved the presentation of this work; Philippe Lopez for the kind provision of the biological data set; Joe Mitchell for pointing out that finding the geodesic in $V(\R^k)$ is equivalent to solving a touring problem; and an anonymous referee for constructive and helpful comments.
\bibliography{thesis_paper_bib}
\end{document}